\newif\iftikziii
\title{Wirtinger curves, Artin groups, and hypocycloids}
\author[E. Artal]{Enrique Artal Bartolo}
\author[J.I. Cogolludo]{Jos{\'e} Ignacio Cogolludo-Agust{\'i}n}
\address{Departamento de Matem\'aticas, IUMA\\ 
Universidad de Zaragoza\\ 
C.~Pedro Cerbuna 12\\ 
50009 Zaragoza, Spain} 
\email{artal@unizar.es,jicogo@unizar.es}
\author[J.~Mart\'{i}n]{Jorge Mart\'{i}n-Morales}
\address{Centro Universitario de la Defensa-IUMA \\
Academia General Militar \\
Ctra.~de Huesca s/n. 50090, Zaragoza, Spain}
\email{jorge@unizar.es}
\urladdr{\url{http://cud.unizar.es/martin}}
\dedicatory{Con cari{\~n}o para nuestra maestra y compa{\~nera} Maite Lozano}
\thanks{Partially supported by MTM2016-76868-C2-2-P
and Grupo Geometr{\'i}a of Gobierno de Arag{\'o}n/Fondo 
Social Europeo.}  
\subjclass[2010]{14H30, 57M10, 32S05, 20F36, 14H50}  
\keywords{Fundamental group, Zariski-van Kampen, Artin groups}
\begin{document}

\begin{abstract}
The computation of the fundamental group of the complement
of an algebraic plane curve has been theoretically solved
since Zariski-van Kampen, but actual computations are usually
cumbersome. In this work, we describe the notion of Wirtinger
presentation of such a group relying on the real picture
of the curve and with the same combinatorial flavor as the
classical Wirtinger presentation; we determine a significant family of curves
for which Wirtinger presentation provides the required fundamental group.
The above methods allow us to compute that fundamental group for an
infinite subfamily of hypocycloids, relating them with Artin groups.
\end{abstract}

\maketitle

\section*{Introduction}

In \cite{wirt:05,wirt:27}, W.~Wirtinger introduced his well-known method to compute the
fundamental group of the complement of a knot. His primary aim was to apply this
method to algebraic knots and links~\cite{br:28}, i.e., links obtained as the transversal intersection
of an algebraic curve (in~$\bc^2$) with a \emph{small} sphere centered at a singular point.
His method also works for any link and it is most useful for such computations.
One of its interesting features is that it provides a simple combinatorial method
to compute this group from the diagram of a knot or link, while keeping track of its
geometrical definition. 
%%%%%%%%%%CAMBIO
The other practical method to compute this group comes
from Artin braid groups~\cite{art:26,art:47}; 
%%%%%%%%%%CAMBIO
this is the idea behind Zariski-van Kampen's
method~\cite{zr:29,vk:33} in order to give a presentation of the fundamental group of the global complement
of a plane algebraic curve (later formalized as braid monodromy by Moishezon~\cite{mz:83}).

In this paper, we are going to adapt Wirtinger's method to compute the fundamental group of the complement
of some plane algebraic curves. These curves must have a real equation and a \emph{rich} algebraic picture.
Our goal is to provide (for a relatively small though significant family of curves) a combinatorial
%%%%CAMBIO
method for the computation of this group. When implemented, Zariski-van Kampen's method tends to rely 
generically on heavy 
%%%CAMBIO
numerical computations using floating-point arithmetics (see~\cite{mbrr:16} for a reasonably efficient implementation
in \texttt{Sagemath} that assures an exact output), this is why some theoretical methods applying to infinite 
families of curves are needed.

The origin of our interest in this method started in~\cite{ac},
where the fundamental group of the complement of some small-degree complexified hypocycloids was studied. 
Our techniques were applied to curves where the real picture gave a lot of information. This is not the case 
for the whole family of
hypocycloids, but the use of their symmetries allowed us to use similar techniques 
for their resulting quotients whenever a rich real picture was obtained.

Following the notation introduced in~\cite{ac}, a hypocycloid is determined by two integers $0<\ell<k$, $\gcd(k,\ell)=1$ 
as the real curve traced by a fixed point on a circumference of radius~$\ell$ while rolling inside a circumference of radius~$N:=k+\ell$. 
Such real curves admit a real algebraic equation. The fundamental group of the complement of their complexified 
version $\cC_{k,\ell}$ is the focus of our interest here.

Some partial results treated in~\cite{ac} and all the hypocycloids for $N\leq 11$ (using the \texttt{Sirocco} 
package by Marco and Rodr{\'i}guez~\cite{mbrr:16}) lead to the following conjecture describing them as Artin groups.
We must emphasize that there is no hope that the techniques presented in~\cite{ac} as well as the computational methods 
used in small degrees may be generalized to the whole family.

\begin{conj}
\label{conj:hypocycloids}
For any pair of coprime integers $0<\ell<k$, $N:=k+\ell$, the fundamental group
for $\cC_{k,\ell}$ is the Artin group of the $N$-gon.
\end{conj}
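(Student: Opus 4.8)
The plan is to realize $\cC_{k,\ell}$ inside the class of \emph{Wirtinger curves} set up above, so that the combinatorial Wirtinger presentation actually computes $\pi_1(\bc^2\setminus\cC_{k,\ell})$, and then to reduce that presentation by Tietze moves to the standard presentation $\langle g_1,\dots,g_N \mid g_ig_{i+1}g_i=g_{i+1}g_ig_{i+1},\ [g_i,g_j]=1\rangle$ of the Artin group of the $N$-gon, where the braid relations run over the edges (consecutive indices mod $N$) and the commutators over the non-edges. Two sanity checks frame the strategy. First, $\cC_{k,\ell}$ is irreducible, being the image of a single rational parametrization, so $H_1$ of its \emph{affine} complement is $\mathbb{Z}$; since $g_ig_{i+1}g_i=g_{i+1}g_ig_{i+1}$ abelianizes to $g_i=g_{i+1}$ and the $N$-gon is connected, the $N$-gon Artin group also abelianizes to $\mathbb{Z}$, so the statement is to be read in $\bc^2$. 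Second, the hypocycloid carries exactly $N$ ordinary cusps arranged cyclically, and a cusp contributes locally the braid relation between the meridians of its two branches while each self-intersection node contributes a commutation; thus the conjectured group is exactly what $N$ cusps glued along a cycle should produce.

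The obstruction is that for $\ell>1$ the real picture of $\cC_{k,\ell}$ is no longer rich enough for our Wirtinger criterion: the $N$ cusps are genuinely real, but part of the braiding at the self-intersections is hidden in the complex locus. I would bypass this using the order-$N$ rotational symmetry, acting on $\bc^2$ as $(u,v)\mapsto(\zeta u,\zeta^{-1}v)$ with $\zeta=e^{2\pi i/N}$ in suitable complex coordinates, which leaves $\cC_{k,\ell}$ invariant and permutes the $N$ cusps cyclically with an isolated fixed point at the centre. Passing to the quotient $\bc^2/(\mathbb{Z}/N)$, the image $\bar{\cC}_{k,\ell}$ carries a single cusp and a genuinely rich real picture to which the Wirtinger method applies. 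The scheme is then to compute the orbifold fundamental group of the quotient complement by the Wirtinger presentation and to recover $\pi_1(\bc^2\setminus\cC_{k,\ell})$ as the kernel in the orbifold short exact sequence with quotient $\mathbb{Z}/N$.

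The reassembly step converts the single-cusp data downstairs into the cyclic family upstairs. The meridians $g_1,\dots,g_N$ of the $N$ arcs between consecutive cusps are cyclically permuted by the deck transformation, so the unique braid relation downstairs lifts to the full cyclic system $g_ig_{i+1}g_i=g_{i+1}g_ig_{i+1}$, while the images of the nodes lift to the long-range commutators $[g_i,g_j]=1$. A sequence of Tietze transformations should then eliminate the auxiliary meridians introduced at the nodes together with the distinguished meridian generating the $\mathbb{Z}/N$ quotient, leaving precisely the $N$-gon Artin presentation.

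The hard part, and where I expect the essential work to concentrate, is to certify \emph{uniformly in $\ell$} that the symmetry-reduced curve is a Wirtinger curve. Concretely this means matching the Wirtinger relations with the Zariski--van Kampen relations read off from the braid monodromy at the singular and vertical-tangent fibres, so as to prove both that \emph{no relation beyond the braid and commutation ones occurs} and, more delicately, that the Wirtinger presentation is \emph{complete} rather than merely a quotient of $\pi_1$. One must in addition control the combinatorics of the nodes for arbitrary $\ell$ and show that, after the quotient, they contribute only the $N$-gon commutations and nothing finer; it is exactly this uniform vanishing of the extra relations that would upgrade the finitely many cases verified in~\cite{ac} and by \texttt{Sirocco} to an infinite subfamily, and completeness together with the compatibility of the Wirtinger generators with the cyclic lift is the main obstacle.
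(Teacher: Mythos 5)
The statement you are proving is stated in the paper as a \emph{conjecture}: the authors prove it only for $\ell=k-1$ (Corollary~\ref{cor:hypocycloids}), explicitly state that their techniques do not extend to the whole family, and announce only the case $(k,k-2)$, $k$ odd, for a forthcoming paper. Your proposal aims at the general case but leaves unresolved exactly the point that makes the general case open. The obstruction for $\ell<k-1$ is quantitative: $\cC_{k,\ell}$ has $N(k-\ell-1)>0$ nodes that are \emph{not real}, so no Wirtinger-type presentation read off the real picture can see them. Your suggested fix --- quotienting by the rotation $(u,v)\mapsto(\zeta u,\zeta^{-1}v)$ of order $N$ --- does not repair this: the rotation commutes with complex conjugation and permutes the non-real nodes within their $\langle\zeta\rangle$-orbits, so their images downstairs are still non-real points of the quotient curve; the ``genuinely rich real picture'' you assert for the quotient is precisely what fails. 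You flag this as ``the hard part,'' but it is the entire content of the conjecture, not a deferred verification.

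There is a second, independent problem with the rotational quotient even where the real picture is fine ($\ell=k-1$): the action has an isolated fixed point, so $\bc^2/(\bz/N)$ is a surface with an $A_{N-1}$ quotient singularity, not $\bc^2$. Theorem~\ref{thm:main0} and the whole Wirtinger/Zariski--van Kampen apparatus of the paper are formulated for affine plane curves and do not apply on this singular surface without substantial extra work (presumably what ``using quotient singularities'' in the announced $(k,k-2)$ paper refers to). The paper instead uses the \emph{reflection} $y\mapsto -y$, whose fixed locus is a line: the quotient is again $\bc^2$, the branch line is absorbed into the auxiliary curve $\cD_{k,k-1}=\{y\,g(x,y)=0\}$, the orbifold group $G_k$ is obtained by killing the square of the meridian of that line, and $\pi_1(\bc^2\setminus\cC_{k,k-1})$ is recovered as the index-two kernel of $G_k\cong\G_N\rtimes\bz/2\to\bz/2$. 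If you want a complete argument for $\ell=k-1$, follow that route; for general $\ell$ your outline does not constitute a proof.
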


In a forthcoming paper the conjecture will be proved for hypocycloids of type $(k,k-2)$ ($k$~odd), using quotient singularities.

The paper is organized as follows: in section~\ref{subsec:zvk} the original Zariski-van Kampen method is recalled as way to provide
a presentation for the fundamental group of the complement of an affine complex curve $\cC$. This presentation has an extra property
proved by A.Libgober in~\cite{Libgober-homotopytype} which assures that the homotopy type of this complement coincides with 
that of the CW-complex associated with the given presentation. Section~\ref{sec:wirtinger} is devoted to introducing the concept
of curves of Wirtinger type as a complexified real curve satisfying certain properties with respect to a projection. Associated
with the real picture of such curves one can define a finite presentation \emph{\`a la Wirtinger}. The resulting group is not 
necessarily isomorphic to the expected group $\pi_1(\bc^2\setminus \cC)$ as illustrated by a series of examples. However, under 
certain additional conditions they are indeed isomorphic. This is stated in the main Theorem~\ref{thm:main0}. The proof of this 
result is given in section~\ref{sec:main-thm}, where the Zariski-van Kampen presentation is transformed into the Wirtinger
presentation preserving the homotopy type of the associated CW-complex in Corollary~\ref{cor:hypocycloids}. 
The purpose of section~\ref{sec:hypocycloids} is to state and prove Conjecture~\ref{conj:hypocycloids} for $\ell=k-1$ 
--\,see Corollary~\ref{cor:hypocycloids}. A brief discussion on Artin groups and first properties of hypocycloids completes the 
section. Finally, a series of examples in section~\ref{sec:wirtingerext} exhibit how the conditions of 
Theorem~\ref{thm:main0} can be relaxed at the cost of understanding the so-called obstruction points. This idea results in a more
sophisticated version of the Wirtinger presentation, whose description goes beyond the scope of this paper and will be presented
somewhere else.

\section{The Zariski-van Kampen method}\label{subsec:zvk}
%\mbox{}

Let $\cC\subset\bc^2$ be a plane algebraic curve. We assume that for a given coordinate system the equation
of $\cC$ is given by a polynomial $f(x,y)\in\bc[x,y]$ such that $\deg_y f=d$ and the coefficient
of $f$ in $y^d$ as polynomial in~$\bc[x][y]$ is~$1$. 
As we are only interested in the zero locus, we can assume $\cC$ to be reduced, i.e., $f$ is a square-free polynomial.
In particular $D(x):=\disc_y(f)\in\bc[x]$ is a non-zero polynomial.

%%%%%%%%%%CAMBIO
The geometrical characterization for $f$ being monic in~$y$ is that $\cC$ contains neither a vertical line nor a 
vertical asymptote. By a \emph{vertical asymptote} we mean a vertical line that is tangent to the curve at infinity.
%%%%%%%%%%CAMBIO
Let us consider $p:\bc^2\to \bc$ be the vertical projection $(x,y)\mapsto x$; the restriction
$p_{|\cC}$ fails to be a covering only at the points of $\Delta:=\{t\in\bc\mid D(t)=0\}$.
As a consequence, if the vertical line $x=t$ is denoted by $L_t$,
\[
p_{|}:\bc^2\setminus\left(\cC\cup\bigcup_{t\in\Delta}L_t\right)\to\bc\setminus\Delta
\]
is a locally trivial fibration. Let us denote by $r$ the cardinality of the discriminant $\Delta$.
Providing a suitable section of this fibration (over a big 
enough closed disk) (e.g. using some horizontal line), the following theorem holds.

\begin{thm}
Under the above hypotheses,
\[
\pi_1\left(\bc^2\setminus\left(\cC\cup\bigcup_{t\in\Delta}L_t\right);(x_0,y_0)\right)=
\left\langle
\mu_1,\dots,\mu_d,\alpha_1,\dots,\alpha_r
\left|
\underset{\quad 1\leq i\leq d,\quad 1\leq j\leq r}{\alpha_j^{-1}\cdot\mu_i\cdot\alpha_j=\mu_i^{\tau_j}}
\right.
\right\rangle.
\] 
\end{thm}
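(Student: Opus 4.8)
The plan is to derive the presentation from the long exact homotopy sequence of the locally trivial fibration, split by the given section. First I would replace the base $\bc\setminus\Delta$ by a large closed disk $\bar{D}_R$ (with $\Delta\subset\operatorname{int}\bar{D}_R$) minus $\Delta$, over which the fibration and the section are both defined; since the inclusion of the restricted total space into $E:=\bc^2\setminus(\cC\cup\bigcup_{t\in\Delta}L_t)$ is a homotopy equivalence, no generality is lost. The base $B:=\bar{D}_R\setminus\Delta$ retracts onto a wedge of $r$ circles, so $\pi_1(B)$ is the free group $\langle\alpha_1,\dots,\alpha_r\rangle$ on meridians of the $r$ discriminant points. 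The fiber over the base point $x_0$ is $L_{x_0}\setminus\cC\cong\bc\setminus\{d\text{ points}\}$, whose fundamental group is the free group $\langle\mu_1,\dots,\mu_d\rangle$ on meridians of the $d$ points of $\cC\cap L_{x_0}$.

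Next I would write the long exact homotopy sequence of the fibration $p_{|}\colon E\to B$ with fiber $F$. Because $B$ is homotopy equivalent to a one-dimensional complex, $\pi_2(B)=0$, and $F$ is connected, so the sequence collapses to the short exact sequence
\[
1\longrightarrow\pi_1(F)\longrightarrow\pi_1(E)\longrightarrow\pi_1(B)\longrightarrow 1,
\]
where injectivity on the left comes from $\pi_2(B)=0$ and surjectivity on the right from $\pi_0(F)=1$. The section $s\colon B\to E$ induces a splitting $s_*\colon\pi_1(B)\to\pi_1(E)$, whence
\[
\pi_1(E)\;\cong\;\pi_1(F)\rtimes\pi_1(B)\;=\;\langle\mu_1,\dots,\mu_d\rangle\rtimes\langle\alpha_1,\dots,\alpha_r\rangle.
\]

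It then remains to identify the action of $\pi_1(B)$ on $\pi_1(F)$ and to translate the semidirect product into the displayed presentation. The action of each generator $\alpha_j$ is the geometric monodromy obtained by transporting the fiber $F$ once around the corresponding discriminant point; this is an automorphism $\tau_j$ of the free group $\langle\mu_1,\dots,\mu_d\rangle$ (a braid acting on meridians), and it yields exactly the relations $\alpha_j^{-1}\cdot\mu_i\cdot\alpha_j=\mu_i^{\tau_j}$. Since both $\pi_1(F)$ and $\pi_1(B)$ are free, the standard presentation of a semidirect product $N\rtimes Q$ — generators of $N$ and of $Q$ together with the conjugation relations encoding the action — carries no further relators, and we recover precisely the asserted presentation. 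The main obstacle is this last step: one must choose the loops $\alpha_j$, the meridians $\mu_i$, and the section compatibly with a common base point, and verify that under these choices conjugation by $\alpha_j$ realizes $\tau_j$ exactly. This bookkeeping of base points and the well-definedness of the braid monodromy $\tau_j$ (independent of the trivialization, up to the prescribed action) is where the genuine care is required, the preceding homotopy-theoretic steps being essentially formal.
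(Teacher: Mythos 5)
Your proposal is correct, and it is the canonical proof of this classical theorem: the paper itself states the result without proof (it is the Zariski--van Kampen theorem, with the section over a big closed disk playing exactly the splitting role you describe), and your argument via the split short exact sequence of the fibration, with $\pi_2$ of the base vanishing and the semidirect-product presentation of two free groups, is precisely the standard route the paper implicitly invokes. The only points needing the care you already flag are the restriction over the large disk (so that the horizontal-line section avoids $\cC$) and the base-point bookkeeping identifying conjugation by $s_*(\alpha_j)$ with the braid action $\tau_j$.
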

%%%%%%CAMBIO
The loops $\mu_j$ correspond to a \emph{geometric} basis of the free group $\mathbb{F}_d:=\pi_1(L_{x_0}\setminus\cC;(x_0,y_0))$ (i.e., each element
is a meridian and the reversed product is homotopic to the boundary of
a big disk, see~\cite{mz:83,acc:01a});
%%%%%%%CAMBIO
the loops $\alpha_i$ correspond to the lift to the horizontal line $y=y_0$  of a \emph{geometric} basis of the 
free group $\pi_1(\bc\setminus\Delta;x_0)$. By the continuity of roots, these loops, provide
braids $\tau_j\in\mathbb{B}_d$ and the right action in the statement corresponds to the standard
right action of $\mathbb{B}_d$ on $\mathbb{F}_d$. 
%%%%%%CAMBIO
% We identify the braid group with the class of homotopy types of
% pairwise non-intersecting paths in~$\mathbb{C}$ with common sets
% of starting and end points~\cite{art:26,art:47}.
% The braid group is generated
% by the standard \emph{half-twists} 
We identify the braid group with the fundamental group of $(\CC^d\setminus \mathcal D)/\Sigma_d$ with base point 
$p_{|\cC}^{-1}(x_0)\subset \CC$, where $\mathcal D=\{(x_1,...,x_d)\in \CC^d\mid x_i=x_j \text{ for some } i<j\}$ and 
the quotient is given by the group action of the permutation group $\Sigma_d$ acting on the coordinates
$\sigma\cdot (x_1,...,x_d)=(x_{\sigma(1)},...,x_{\sigma(d)})$~\cite{art:26,art:47}.
The braid group is generated
by the standard \emph{half-twists}
%%%%%%CAMBIO
$\sigma_1,\dots,\sigma_{d-1}$ and the action on the free
group is defined  by
\begin{equation}
\label{eq:braid-action}
\mu_i^{\sigma_j}:=
\begin{cases}
\mu_i&\text{ if }j\neq i,i-1\\
\mu_{i+1}&\text{ if }j=i\\
\mu_{i}\cdot\mu_{i-1}\cdot\mu_i^{-1}&\text{ if }j=i-1,
\end{cases} \qquad
\mu_i^{\sigma_j^{-1}}=
\begin{cases}
\mu_i&\text{ if }j\neq i,i-1\\
\mu_{i}^{-1}\cdot\mu_{i+1}\cdot\mu_i&\text{ if }j=i\\
\mu_{i-1}&\text{ if }j=i-1.
\end{cases}
\end{equation}
For the sake of completeness, the action of the inverse of the standard half-twists have been added.
% \begin{equation}
% \label{eq:braid-action2}
% \mu_i^{\sigma_j^{-1}}=
% \begin{cases}
% \mu_i&\text{ if }j\neq i,i-1\\
% \mu_{i}^{-1}\cdot\mu_{i+1}\cdot\mu_i&\text{ if }j=i\\
% \mu_{i-1}&\text{ if }j=i-1.
% \end{cases}
% \end{equation}

Assume for simplicity that for each $t\in\Delta$, $p_{|\cC}$ fails only at one point $(t,y(t))$ to be a covering over $t$.
The main ideas behind the Zariski-van Kampen method are the following ones. On one side, $\alpha_j$ will be
null-homotopic in $\bc^2\setminus\cC$; on the other side, if the braid $\tau_j$ correspond to
$t_j\in\Delta$ (denoting $y_j:=y(t_j)$), then $\tau_j$ can be written as $\eta_j^{-1}\cdot\delta_j\cdot\eta_j$, where 
$\delta_j$ is a positive braid involving only the $m_j$ strings close to $(t_j,y_j)$ (and whose conjugacy class is determined 
by the topological type of $\cC$ at $(t_j,y_j)$). Note that, without loss of generality, one might assume that exactly the first 
$m_j$ strings are involved in~$\delta_j$.
\begin{cor}\label{cor:zvk}
\[
\pi_1\left(\bc^2\setminus\cC;(x_0,y_0)\right)=
\langle
\mu_1,\dots,\mu_d
\mid
\mu_i^{\delta_j\cdot\eta_j}=\mu_i^{\eta_j},\quad 1\leq j\leq r,\quad 1\leq i< m_j
\rangle.
\] 
Moreover, in case $\deg f=\deg_y f=d$, a presentation for the fundamental group
of the complement of the Zariski closure of $\cC$ in $\bp^2$ is obtained by adding the relation
\[
\mu_d\cdot\ldots\cdot\mu_1=1.
\]
\end{cor}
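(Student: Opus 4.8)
The plan is to derive Corollary~\ref{cor:zvk} from the Theorem by eliminating the generators $\alpha_j$ and simplifying the relations. The starting point is the presentation of $\pi_1(\bc^2\setminus(\cC\cup\bigcup L_t))$ with generators $\mu_1,\dots,\mu_d,\alpha_1,\dots,\alpha_r$ and relations $\alpha_j^{-1}\mu_i\alpha_j=\mu_i^{\tau_j}$. To pass from the complement of $\cC$ together with the vertical lines to the complement of $\cC$ alone, I would invoke the geometric fact (the first of the two ``main ideas'' stated before the corollary) that each $\alpha_j$ becomes null-homotopic in $\bc^2\setminus\cC$. Concretely, filling back in the line $L_{t_j}$ amounts to killing the meridian $\alpha_j$, so I would set $\alpha_j=1$ for every $j$ in a Tietze transformation. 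This removes the generators $\alpha_1,\dots,\alpha_r$ and turns each conjugation relation into the monodromy relation $\mu_i=\mu_i^{\tau_j}$, valid for all $1\le i\le d$ and $1\le j\le r$.

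Next I would rewrite each full set of relations $\{\mu_i=\mu_i^{\tau_j}\}_i$ using the stated factorization $\tau_j=\eta_j^{-1}\delta_j\eta_j$. Since the relation $\mu_i=\mu_i^{\tau_j}$ can be rewritten as $\mu_i^{\eta_j}=\mu_i^{\delta_j\eta_j}$ by applying the automorphism corresponding to $\eta_j$ (the braid action is a right action by automorphisms of $\mathbb{F}_d$, so it preserves the relation ideal), the family $\{\mu_i=\mu_i^{\tau_j}\}_i$ is equivalent to $\{\mu_i^{\eta_j}=\mu_i^{\delta_j\eta_j}\}_i$. The key reduction is then to observe that $\delta_j$ only involves the first $m_j$ strands, so $\mu_i^{\delta_j}=\mu_i$ for $i\ge m_j$; hence for those indices the relation $\mu_i^{\eta_j}=\mu_i^{\delta_j\eta_j}$ is trivially satisfied and may be discarded. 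This leaves exactly the relations indexed by $1\le i< m_j$, as in the statement. One should also check that the $m_j-1$ surviving relations are the independent ones --- this matches the standard count, since a positive braid $\delta_j$ on $m_j$ strands fixes the product $\mu_{m_j}\cdots\mu_1$ of the involved meridians, making the $m_j$-th relation redundant given the other $m_j-1$.

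For the projective statement, I would use the standard comparison between the affine and projective complements. Passing to $\bp^2$ adds the line at infinity $L_\infty$; the complement $\bp^2\setminus(\bar{\cC}\cup L_\infty)$ equals the affine complement $\bc^2\setminus\cC$, and removing $L_\infty$ (i.e.\ filling it in) kills the meridian of the line at infinity. Under the hypothesis $\deg f=\deg_y f=d$, the curve $\cC$ meets $L_\infty$ transversally in $d$ points away from the vertical direction, so the meridian of $L_\infty$ is homotopic to the inverse of the reversed product of the meridians $\mu_d\cdots\mu_1$ on a generic fiber (this is exactly the ``reversed product is homotopic to the boundary of a big disk'' property recorded for a geometric basis). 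Killing it therefore imposes precisely the relation $\mu_d\cdots\mu_1=1$.

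The main obstacle I anticipate is justifying the geometric inputs rigorously rather than quoting them: namely that each $\alpha_j$ is null-homotopic in $\bc^2\setminus\cC$, and that the product $\mu_d\cdots\mu_1$ represents (the inverse of) the meridian at infinity. Both rest on the locally trivial fibration structure and on the precise meaning of a \emph{geometric} basis, and both are essentially the content of the Zariski-van Kampen theorem itself; in a self-contained write-up I would need to spell out the section of the fibration and track how the boundary loop of the fiber decomposes. The remaining steps --- the Tietze elimination of the $\alpha_j$, the conjugation of relations by $\eta_j$, and the discarding of the relations for $i\ge m_j$ --- are purely formal group-theoretic manipulations, so the difficulty is concentrated entirely in the topological identifications, not in the algebra.
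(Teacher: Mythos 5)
Your proposal is correct and takes essentially the same route as the paper, which obtains the corollary by exactly your steps: setting $\alpha_j=1$ to pass to $\bc^2\setminus\cC$, conjugating the monodromy relations by $\eta_j$ via the factorization $\tau_j=\eta_j^{-1}\cdot\delta_j\cdot\eta_j$, discarding the superfluous relations as in Remark~\ref{rem:zvkrels}, and adding $\mu_d\cdot\ldots\cdot\mu_1=1$ from the meridian at infinity in the projective case. One small slip: your claim $\mu_i^{\delta_j}=\mu_i$ for $i\ge m_j$ is false at $i=m_j$ (the paper asserts it only for $m_j<i\le d$), but your follow-up observation that $\delta_j$ fixes the product $\mu_{m_j}\cdots\mu_1$, making the $m_j$-th relation redundant, is precisely the paper's argument and repairs this.
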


\begin{obs}
\label{rem:zvkrels}
The fact that the only needed relations come from $1\leq j<m_j$ is due to the fact that
$(\mu_{m_j}\cdot\ldots\cdot\mu_1)^{\delta_j}=\mu_{m_j}\cdot\ldots\cdot\mu_1$ and
$\mu_i^{\delta_j}=\mu_i$ for $m_j<i\leq d$.
Note that for double points, the corresponding $m_j$ equals 2, and hence only one relation 
is required. For meridians $x_1,x_2$ close to the singular point of $p_{|\cC}$, the following
relation is satisfied:
\[
x_1=x_2\ (m=0),\quad [x_1,x_2]=1\ (m=1),\quad x_1\cdot x_2\cdot x_1=x_2\cdot x_1\cdot x_2\ (m=2).
\]
This comes from the action of the local braid $\sigma_1^{m+1}$ on $x_1,x_2$ as described in~\eqref{eq:braid-action}.
\end{obs}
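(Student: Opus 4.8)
The plan is to treat the observation in two independent parts: the redundancy of the relations indexed by $m_j\le i\le d$, and the explicit form of the single relation attached to a double point.

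For the first part, I would start from the relations of the preceding Zariski--van Kampen theorem. Setting $\alpha_j=1$ (the loops $\alpha_j$ being null-homotopic in $\bc^2\setminus\cC$) turns each relation $\alpha_j^{-1}\mu_i\alpha_j=\mu_i^{\tau_j}$ into $\mu_i^{\tau_j}=\mu_i$; writing $\tau_j=\eta_j^{-1}\cdot\delta_j\cdot\eta_j$ and acting by the automorphism $\eta_j$ rewrites this as $\mu_i^{\delta_j\cdot\eta_j}=\mu_i^{\eta_j}$, exactly the relators appearing in Corollary~\ref{cor:zvk}. Since $\eta_j$ induces an automorphism of $\mathbb{F}_d$, it carries the normal closure of $\{\mu_i^{\delta_j}\mu_i^{-1}\}_i$ onto the normal closure of $\{\mu_i^{\delta_j\cdot\eta_j}(\mu_i^{\eta_j})^{-1}\}_i$, so it suffices to analyze the relators $\mu_i^{\delta_j}\mu_i^{-1}$ in the free group.

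The geometric input is twofold. Because $\delta_j$ is a positive braid supported on the first $m_j$ strings, the remaining strings are untouched, whence $\mu_i^{\delta_j}=\mu_i$ for $m_j<i\le d$ and those relators are trivial. Moreover every braid of $\mathbb{B}_d$ fixes the ordered product $\mu_d\cdot\ldots\cdot\mu_1$ (the boundary of a large disk in the fiber, which is the defining feature of a geometric basis recalled after the theorem). Combining these two facts and cancelling the fixed prefix $\mu_d\cdot\ldots\cdot\mu_{m_j+1}$ yields
\[
(\mu_{m_j}\cdot\ldots\cdot\mu_1)^{\delta_j}=\mu_{m_j}\cdot\ldots\cdot\mu_1 .
\]
To see that the relation for $i=m_j$ is then redundant, I would pass to the quotient of $\mathbb{F}_d$ by the relators with $1\le i<m_j$; there $\mu_i^{\delta_j}=\mu_i$ for $i<m_j$, so substituting into the displayed identity and cancelling the common suffix $\mu_{m_j-1}\cdot\ldots\cdot\mu_1$ leaves $\mu_{m_j}^{\delta_j}=\mu_{m_j}$. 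Hence only the indices $1\le i<m_j$ contribute, as claimed.

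For the second part, a double point forces $m_j=2$, so a single relation survives, and its shape follows from a direct computation with the action~\eqref{eq:braid-action} of the local braid $\sigma_1^{m+1}$ on $x_1,x_2$: iterating $x_1^{\sigma_1}=x_2$ and $x_2^{\sigma_1}=x_2\cdot x_1\cdot x_2^{-1}$ produces $x_1^{\sigma_1^{m+1}}$, and the relation $x_1^{\sigma_1^{m+1}}=x_1$ becomes $x_1=x_2$ for $m=0$, $[x_1,x_2]=1$ for $m=1$, and $x_1\cdot x_2\cdot x_1=x_2\cdot x_1\cdot x_2$ for $m=2$. I expect the only genuine subtlety to be the first geometric input\,--\,that a braid fixes $\mu_d\cdot\ldots\cdot\mu_1$ and that a braid on the first $m_j$ strings fixes the higher meridians\,--\,which rests on the conventions fixing a geometric basis rather than on formal manipulation; once these are granted, the remaining steps are the elementary cancellations and the braid-word computation above.
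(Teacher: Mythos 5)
Your proposal is correct and follows essentially the same route as the paper, which justifies the remark by exactly the two facts you use --- that $\mu_i^{\delta_j}=\mu_i$ for $m_j<i\leq d$ since $\delta_j$ is supported on the first $m_j$ strings, and that $(\mu_{m_j}\cdot\ldots\cdot\mu_1)^{\delta_j}=\mu_{m_j}\cdot\ldots\cdot\mu_1$ (which you legitimately derive from the invariance of the full product $\mu_d\cdot\ldots\cdot\mu_1$ under the Artin action, the defining feature of a geometric basis) --- together with the same direct computation of the action of $\sigma_1^{m+1}$ via~\eqref{eq:braid-action}. Your additional details (transporting relators through the automorphism $\eta_j$ and the cancellation argument showing the $i=m_j$ relator is redundant modulo the others) are accurate fillings-in of what the paper leaves implicit.
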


\section{Wirtinger curves}\label{sec:wirtinger}
%\mbox{}

In this section the new concept of Wirtinger curves is defined. For this class of curves, a presentation of 
the fundamental group of their complement can be combinatorially obtained from their real picture.
We start first by defining curves of \emph{Wirtinger type}, which will be those which are candidates
to be Wirtinger curves.

An affine curve $\mathcal{C}\subset\mathbb{C}^2$ 
% with the notations of \S\ref{subsec:zvk}.
% defined by a real equation $\cC=\{f(x,y)=0\}$,
% where $\deg f=d$. 
is called of \emph{Wirtinger type} if:
\begin{enumerate}
\enet{(W\arabic{enumi})} 
 \item the ramification points of the vertical projection $p(x,y)=x$ are all real, that is,
 $$R=\{P=(t,y_t)\in \cC \mid p^{-1}(t) \text{ and } \cC \text{ do not intersect transversally at } P\}\subset \br^2,$$
 in particular, $\Delta=p(R)\subset\br$;
 \item the local branches of $\cC$ at $P\in R$ are all real,
 \item\label{W3} the vertical fiber $L=p^{-1}(x_0)$ of $p$ at some $x_0\in \br$ intersects the real part of $\cC$ with 
 maximal cardinality, that is, $\# (p|_\cC^{-1}(x_0)\cap \br^2)=\deg_y f$ 
 --\,note that this is not necessarily the degree of the curve,
 \item it contains no vertical asymptotes and no vertical lines, and 
%  there are no vertical lines in the tangent cone of 
%  $\cC$ at any point other than simple tangencies at smooth points,
  simple tangencies at smooth points are the only vertical lines in the tangent cone of $\cC$ at any point.
  %%%%%%%%%%%CAMBIO
 \item the only singularities of $\cC$ are either double --\,type $\ba_m$-- or ordinary (i.e., smooth branches with pairwise distinct tangent cones).
   %%%%%%%%%%%CAMBIO
\end{enumerate}

Given a curve $\cC$ of Wirtinger type, we consider its \emph{diagram} $\cC_\br=\cC\cap \br^2$. 
   %%%%%%%%%%%CAMBIO
Denote by $\mathcal{V}_\mathcal{C}$ the set of singular points of $\mathcal{C}$, that is, 
$\mathcal{V}_\mathcal{C}=\Sing \cC=\br^2\cap \Sing \cC$; 
they will be referred to as the \emph{vertices} of $\mathcal{C}$.
The \emph{edges} of $\mathcal{C}$ are the closures of the connected components of 
$\mathcal{C}_{\mathbb{R}}\setminus\mathcal{V}_\mathcal{C}$. The set of edges will be denoted as $E_{\mathcal{C}}$.
Our purpose is to describe
a presentation of a group based on the vertices and edges of $\cC_\br$.
   %%%%%%%%%%%CAMBIO
A \emph{Wirtinger presentation} $\G_\cC$ associated with a curve of Wirtinger type $\cC$ is given by a generating 
system parametrized by $E_\cC$, that is, $\{x_\ell\mid \ell\in E_\cC\}$. In addition, to each $P\in\cV_\cC$ the following 
relations are associated:
\begin{enumerate}
\enet{(R\arabic{enumi})} 
\item\label{R1}
\label{case:ordinary}
If $P$ is an ordinary real singular point of multiplicity~$m$ as in~\eqref{eq:ordinary},
then the edges associated with $P$ can be sorted out in two groups $\{x_1,\dots,x_m\}$ and $\{y_1,\dots,y_m\}$
as shown below. Define $\bar x_k=x_kx_{k-1}\cdots x_{1}$ and $\bar y_k=y_1y_{2}\cdots y_k$. Then

\begin{equation}
\label{eq:ordinary}
\begin{tabular}{cc}
\begin{tikzpicture}[yscale=.5,baseline=0]
\coordinate (O) at (0,0);
\foreach \x in {-2,...,2}
{
\coordinate (P\x) at (1,{\x});
\coordinate (Q\x) at (-1,{-\x});
}
\foreach \x in {-1,-2}
{
\draw (P\x)--(Q\x);
\pgfmathtruncatemacro{\z}{\x/2+5/2}
\node[right] at (P\x) {$y_\z$};
\node[left] at (Q\x) {$x_\z$};
}
\foreach \x in {1,2}
{
\draw (P\x)--(Q\x);
}
\node[right] at (P1) {$y_{m-1}$};
\node[left] at (Q1) {$x_{m-1}$};
\node[right] at (P2) {$y_{m}$};
\node[left] at (Q2) {$x_{m}$};

\node at (P0) {$\vdots$};
\node at (Q0) {$\vdots$};
\node[above=4pt] at (O) {$P$};
\end{tikzpicture}
&
$
\begin{cases}
[\bar x_m,x_j]=1\\
%\bar x_m=\bar y_m\\
y_j=\bar x_j^{-1}\cdot x_j\cdot \bar x_j=\bar x_{j-1}^{-1}\cdot x_j\cdot \bar x_{j-1}
\end{cases}
$
\end{tabular}
\end{equation}
Note that $\bar x_m=\bar y_m$ is a consequence and the relations could be written also from left to right.
%\vspace*{14pt}

\item\label{R2}
\label{case:double}
If $P$ is of type $\ba_{m}$, then the following relations are added:

\begin{equation}
\label{eq:double}
\begin{tabular}{lcl}
\begin{tikzpicture}[baseline=0]
\begin{scope}[shift={(0,0)}]
\coordinate (O) at (0,0);
\coordinate (P) at (2,1);
\coordinate (Q) at (2,-1);
\node[above=2pt] at (O) {$P$};
\draw (P) to [out=-135,in=0] (O) to [out=0,in=135] (Q);
\node[right=2pt] at (P) {$x_1$};
\node[right=2pt] at (Q) {$x_2$};
\node[below=2pt] at (O) {$m={2k}$};
\end{scope}
\end{tikzpicture}
&&
$x_1 (x_2x_1)^k= (x_2x_1)^{k} x_2,$
\\
\begin{tikzpicture}[baseline=0]
\begin{scope}[shift={(1,1.5)}]
\coordinate (O) at (0,0);
\coordinate (P) at (2,1);
\coordinate (Q) at (2,-1);
\coordinate (R) at (-2,1);
\coordinate (S) at (-2,-1);
\node[above=2pt] at (O) {$P$};
\draw[line width=1.5pt] (P) to [out=-135,in=0] (O) to [out=180,in=-45] (R);
\draw (Q) to [out=135,in=0] (O) to [out=180,in=45] (S);
\node[right=2pt] at (P) {$y_1$};
\node[right=2pt] at (Q) {$y_2$};
\node[left=2pt] at (R) {$x_1$};
\node[left=2pt] at (S) {$x_2$};
\node[below=2pt] at (O) {$m={4k-1}$};
\end{scope}
%\end{tikzpicture}
%\begin{tikzpicture}[baseline=0]
\begin{scope}[shift={(1,-1.5)}]
\coordinate (O) at (0,0);
\coordinate (P) at (2,1);
\coordinate (Q) at (2,-1);
\coordinate (R) at (-2,1);
\coordinate (S) at (-2,-1);
\node[above=2pt] at (O) {$P$};
\draw (P) to [out=-135,in=0] (O) to [out=180,in=45] (S);
\draw[line width=1.5pt] (Q) to [out=135,in=0] (O) to [out=180,in=-45] (R);
\node[right=2pt] at (P) {$y_2$};
\node[right=2pt] at (Q) {$y_1$};
\node[left=2pt] at (R) {$x_1$};
\node[left=2pt] at (S) {$x_2$};
\node[below=2pt] at (O) {$m={4k+1}$};
\end{scope}
\end{tikzpicture}
&&
$
\begin{cases}
(x_1x_2)^{\frac{m+1}{2}}= (x_2x_1)^{\frac{m+1}{2}},&\\
y_i=(x_2x_1)^{-k} x_i (x_2x_1)^{k}, & i=1,2
\end{cases}
$
\\
\end{tabular}
\end{equation}
\end{enumerate}

\begin{obs}
\label{rem:xrels}
Note that relations in~\eqref{eq:ordinary} and \eqref{eq:double} involving only $x_i$'s correspond 
with the local braid-monodromy relations described in Remark~\ref{rem:zvkrels}.

The remaining relations describe meridians on one side of the singularity in terms of meridians on the 
%%%%%%%%%%%CAMBIO
other side as elements of the local fundamental group of the singular point,
whenever there are real branches on both sides of the singular point.
% These relations are given 
% as a result of 
% the action of the braid obtained running \emph{half a braid} in the braid monodromy defined around the singular point.
%For instance, 
For a $\ba_{4k-1}$-singularity, the local braid monodromy is given by $\sigma_1^{4k}$,
hence the relation $y_i=(x_2x_1)^{-k} x_i (x_2x_1)^{k}$, $i=1,2$ in~\eqref{eq:double} is nothing but
$y_i=x_i^{\sigma_1^{-2k}}$ as described in~\eqref{eq:braid-action}.
For an ordinary singular point of multiplicity~$m$, the local braid monodromy
is given by $\Delta_m^2$ (where $\Delta_m$ represents a half-full twist in $m$~strands); the second line of~\eqref{eq:ordinary} is nothing but
$y_i=x_i^{{\Delta_m}^{-1}}$.
%%%%%%%%%%%CAMBIO
\end{obs}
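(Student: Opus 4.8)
The plan is to reduce the entire statement to a purely local computation inside a small bidisk $B$ around each singular point $P\in R$, and then to identify the two families of relations with, respectively, the \emph{full} and the \emph{half} local braid monodromy at $P$. First I would set up the local fibration: writing $t=p(P)$, the restriction of $p$ to $B\setminus p^{-1}(t)$ is a locally trivial fibration over a small punctured disk whose generic fiber $L\cap B$ carries a free group on the meridians of the branches of $\cC$ through $P$. Here $x_i$ (resp.\ $y_i$) is the meridian carried by the edge of $\cC_\br$ reaching $P$ from the left (resp.\ leaving $P$ to the right), the pairing $x_i\leftrightarrow y_i$ being exactly the branch labeling fixed in~\eqref{eq:ordinary} and~\eqref{eq:double}. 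The one external input I would simply cite is the classical value of the braid traced by a small positive loop around $t$: the full twist $\Delta_m^2$ for an ordinary $m$-fold point, and $\sigma_1^{m+1}$ for an $\ba_m$ point, so $\sigma_1^{4k}$ when $m=4k-1$.

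Next I would dispatch the relations involving only the $x_i$. By Corollary~\ref{cor:zvk} these are precisely the invariance relations $x_i^{\text{mon}}=x_i$ attached to the full monodromy, so the task is to unwind them via~\eqref{eq:braid-action}. For an ordinary point one checks on generators that $\Delta_m^2$ acts on $\mathbb{F}_m$ as conjugation by the reversed product $\bar x_m=x_m\cdots x_1$; invariance then reads $[\bar x_m,x_j]=1$, the first line of~\eqref{eq:ordinary}. For an $\ba_m$ point a single relation $x_1^{\sigma_1^{m+1}}=x_1$ suffices, and iterating~\eqref{eq:braid-action} turns it into the length-$(m+1)$ braid relation, i.e.\ $(x_1x_2)^{(m+1)/2}=(x_2x_1)^{(m+1)/2}$ for $m$ odd and $x_1(x_2x_1)^k=(x_2x_1)^kx_2$ for $m=2k$, exactly as in~\eqref{eq:double}. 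This reproduces Remark~\ref{rem:zvkrels}.

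The heart of the statement is the cross relations expressing each $y_i$ in terms of the $x_i$. The point I would make is that moving from the left fiber to the right fiber along a base path passing over the critical value $t$ realizes a canonical \emph{square root} of the full monodromy; it is the real structure of a Wirtinger-type curve (all of $R$ and all branches real, real basepoint $x_0$) that makes this half-path, and hence the half-twist, unambiguous and orientation-determined. Consequently $y_i$ is the image of $x_i$ under $\sigma_1^{-2k}$ in the $\ba_{4k-1}$ case and under $\Delta_m^{-1}$ in the ordinary case. For the former, \eqref{eq:braid-action} shows $\sigma_1^{-2}$ acts as conjugation by $(x_2x_1)^{-1}$, giving directly $y_i=(x_2x_1)^{-k}x_i(x_2x_1)^k$, the second line of~\eqref{eq:double}. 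For the latter, evaluating $\Delta_m^{-1}$ on the generators through~\eqref{eq:braid-action}—an induction on $m$ after checking $m=2,3$ by hand—yields $x_j^{\Delta_m^{-1}}=\bar x_{j-1}^{-1}x_j\bar x_{j-1}$ up to the order-reversing relabeling $j\mapsto m+1-j$ that $\Delta_m$ induces on the strands, which is precisely the reversal built into the figure in~\eqref{eq:ordinary}; reading it off with that labeling produces $y_j=\bar x_{j-1}^{-1}x_j\bar x_{j-1}=\bar x_j^{-1}x_j\bar x_j$.

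I expect the genuine obstacle to be the geometric bookkeeping rather than the algebra. Concretely, the delicate steps are: (a) justifying that the half-monodromy is the correct object linking the two fibers and is well defined, not merely \emph{some} square root of $\Delta_m^2$ or $\sigma_1^{4k}$—this is exactly where the real picture of $\cC$ is used; (b) reconciling the order-reversal of $\Delta_m$ on strands with the branch labeling of the edges so that the shorthand ``$y_i=x_i^{\Delta_m^{-1}}$'' in Remark~\ref{rem:zvkrels} matches the indexed relations of~\eqref{eq:ordinary}; and (c) fixing the sign of the half-twist forced by the chosen geometric basis. Once these are pinned down, the remaining verifications are the routine unwinding of~\eqref{eq:braid-action} indicated above.
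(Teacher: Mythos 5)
Your proposal is correct and follows essentially the same route as the paper: the $x$-only relations are identified with the full local braid monodromy relations of Remark~\ref{rem:zvkrels} via Corollary~\ref{cor:zvk}, and the cross relations $y_i=x_i^{\sigma_1^{-2k}}$, $y_i=x_i^{\Delta_m^{-1}}$ are obtained by transporting meridians across the critical fiber by the (real-structure-determined) half of the local monodromy, exactly as the paper does here and again in the proof of Theorem~\ref{thm:main0}, where the relation $y=x^{\tilde\delta_j}$ with $\tilde\delta_j^2=\delta_j$ is introduced. The bookkeeping points you flag — well-definedness of the half-path, the order reversal of $\Delta_m$ on strands versus the edge labeling of~\eqref{eq:ordinary}, and the sign of the half-twist — are precisely the conventions the paper fixes through its figures and the choice of geometric basis.
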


\begin{obs}
Note that the case $m=1$ is in both the family of ordinary points
and double points. One can check that relations~\eqref{eq:ordinary} become
$$
[x_2x_1,x_1]=1, y_1=x_1, y_2=x_1^{-1}x_2x_1=x_2.
$$
whereas relations~\eqref{eq:double} become
$$
x_1x_2=x_2x_1, y_1=x_1, y_2=x_2.
$$
Therefore both sets of relations are trivially equivalent to
$$[x_1,x_2]=1, y_1=x_1,y_2=x_2.$$
In certain cases, $\G_\cC$ is a presentation of $\pi_1(\bc^2\setminus\cC)$, but not necessarily.
\end{obs}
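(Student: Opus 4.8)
The plan is to verify the two stated equivalences by direct substitution into the relation schemes \eqref{eq:ordinary} and \eqref{eq:double}, followed by elementary simplification in the free group on $x_1,x_2,y_1,y_2$. The node is simultaneously the member $\ba_1$ of the double-point family (so $m=1$ in \eqref{eq:double}) and an ordinary point of multiplicity two (so two branches in \eqref{eq:ordinary}); this harmless index overload is what is meant by the phrase ``the case $m=1$ is in both families''. Since all relations live in the common free group on $x_1,x_2,y_1,y_2$, ``trivially equivalent'' means that the two relation sets normally generate the same subgroup, so it suffices to show that each set is equivalent to the normalized system $[x_1,x_2]=1$, $y_1=x_1$, $y_2=x_2$.

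First I would treat the ordinary point at multiplicity two, where $\bar x_1=x_1$ and $\bar x_2=x_2x_1$. The commutator relations $[\bar x_2,x_j]=1$ then read $[x_2x_1,x_1]=1$ and $[x_2x_1,x_2]=1$. The first says that $x_2x_1$ commutes with $x_1$, which after cancelling $x_1$ on the right is precisely $x_1x_2=x_2x_1$; the second says that $x_2x_1$ commutes with $x_2$, which after cancelling $x_2$ on the left yields the same commutation and is therefore redundant. Thus both collapse to the single relation $[x_1,x_2]=1$ retained in the remark. Substituting into the second line of \eqref{eq:ordinary} gives $y_1=\bar x_1^{-1}x_1\bar x_1=x_1$ and $y_2=\bar x_1^{-1}x_2\bar x_1=x_1^{-1}x_2x_1$, and the latter equals $x_2$ once $[x_1,x_2]=1$ is imposed. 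This reproduces exactly the normalized system.

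Next I would treat the double point: among the three cases of \eqref{eq:double}, the value $m=1$ occurs only in the $m=4k+1$ family with $k=0$, since $2k=1$ and $4k-1=1$ have no integer solution. With $k=0$ one has $\frac{m+1}{2}=1$ and the conjugating factor $(x_2x_1)^{k}$ equals the identity, so the first relation becomes $x_1x_2=x_2x_1$ and the conjugation relations collapse to $y_i=x_i$, that is $y_1=x_1$ and $y_2=x_2$. This is verbatim the same normalized system, which completes the identification of the two schemes.

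I do not expect a genuine obstacle here: the entire content is the two cancellation arguments showing that each commutator relation of the ordinary scheme reduces to $[x_1,x_2]=1$, together with the observation that $k=0$ trivializes the conjugating factor in the double-point scheme. The only point deserving care is bookkeeping: one must confirm that \eqref{eq:ordinary} at multiplicity two and \eqref{eq:double} at $m=1$ are applied to the same labeled picture of a node, so that the generators $x_1,x_2,y_1,y_2$ match across the two presentations. Once this matching is fixed the equivalence is immediate, and it is consistent with Remark~\ref{rem:xrels}, where the $x_i$-only relations of both schemes are identified with the local braid-monodromy relation $[x_1,x_2]=1$ of a node.
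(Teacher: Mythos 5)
Your verification is correct and is exactly the direct substitution the paper's ``one can check'' invites: for the ordinary scheme at multiplicity two you correctly reduce both commutators $[x_2x_1,x_1]=1$ and $[x_2x_1,x_2]=1$ to $[x_1,x_2]=1$ and then get $y_1=x_1$, $y_2=x_1^{-1}x_2x_1=x_2$, while for the double-point scheme you correctly identify $m=1$ as the $m=4k+1$ case with $k=0$, trivializing the conjugating factor. Nothing is missing; your explicit handling of the index clash (multiplicity $2$ versus $\ba_1$) is a useful clarification the paper leaves implicit.
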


\begin{ejm}
For any real smooth curve $\cC$ of Wirtinger type, note that $\G_\cC$ is a presentation of the free group of rank $r$, 
where $r$ is the number of connected components of $\cC_\br$, however $\pi_1(\bc^2\setminus\cC)=\bz$. 
%%%%%%%%%CAMBIO
It applies to $\cC:y^2-x(x^2-1)=0$ ($r=2$).
%%%%%%%%%CAMBIO
\end{ejm}

\begin{ejm}
Let $\cC$ be a strongly real line arrangement (with no vertical lines), that is, a finite union of lines where each 
line has a real equation. In particular $\cC=\{\ell_1\cdot\ldots\cdot \ell_r=0\}$ where $\ell_i\in \br[x,y]$,
$i=1,\dots,r$ are pairwise non-proportional linear forms. 
%%%%%%%%%CAMBIO
Note that in this case $\G_\cC$ gives the Salvetti presentation~\cite{sal:88}
of $\pi_1(\bc^2\setminus\cC)$ and it can be reduced to the
Zariski-van Kampen presentation associated with the vertical projection,
with Tietze transformations of type I and IIa.
%%%%%%%%%CAMBIO
\end{ejm}

\begin{ejm}
Consider the affine tricuspidal quartic whose line at infinity is bitangent (the \emph{deltoid}). This curve has a real equation
$$\cC:=\{(x,y)\in \bc^2 \mid 3(x^2 + y^2 )^2 + 24x(x^2 + y^2 ) + 6(x^2 + y^2 )-32x^3 - 1 = 0\}.$$
Its diagram is a triangle whose vertices are the three cusps. Therefore 
$\G_\cC=\langle x_1,x_2,x_3:x_1x_2x_1=x_2x_1x_2, x_2x_3x_2=x_3x_2x_3, x_3x_1x_3=x_1x_3x_1\rangle$
is the standard presentation of the Artin group of the triangle, which coincides with $\pi_1(\bc^2\setminus \cC)$ 
--\,see~\cite{Oka-tangential,ac}.
\end{ejm}

The following result offers a wide collection of examples of curves of Wirtinger type whose 
Wirtinger presentation is a presentation of the fundamental group~$\pi_1(\bc^2\setminus \cC)$.
In order to state the conditions one needs to introduce the simple concept of real branches \emph{facing} a vertical line.
Consider a vertical real line $L_\br$ and a singular point of the vertical projection $P\notin L_\br$. The vertical line through
$P$ separates the real plane in two half-planes, one of them say $H^+$ containing $L_\br$. If $H^+$ contains real branches at 
$P$, then these branches at $P$ are said to \emph{face}~$L_\br$.

\begin{thm}
\label{thm:main0}
Let $\mathcal{C}$ be a curve of Wirtinger type and such that the real part
of each irreducible component is connected. Let $L=p^{-1}(x_0)$ be a line satisfying{\rm~\ref{W3}} 
and let $B\subset \br^2$ be a closed topological disk (with piecewise smooth boundary) such that:
\begin{enumerate}
\enet{\rm(\arabic{enumi})} 
\item\label{thm:main1} $B\cup\cC_\br\cup L_\br$ is simply connected.%The intersection $L\cap B$ is connected.
\item\label{thm:main2} There is a parallel \emph{real} plane 
$H_\varepsilon=\br\times (\br+\varepsilon \sqrt{-1})$ to $\br^2$ with $\varepsilon\neq 0$ such that 
\begin{equation}
\label{eq:Be}
B_\varepsilon\cap \cC=\emptyset, \text{ where } 
B_\varepsilon=\{(x,y+\varepsilon \sqrt{-1})\in H_\varepsilon \mid (x,y)\in B\}\subset H_\varepsilon.
\end{equation}
\item\label{thm:main3} All singularities of the vertical projection face~$L_\br$.
% \item $\cC_\br\cap B=\cC_\br\cap C$ where $C$ is a square containing all the intersections
% with the non-transversal vertical lines.
\end{enumerate}
Then $\G_\cC$ is a presentation of~$\pi_1(\bc^2\setminus \cC)$.
\end{thm}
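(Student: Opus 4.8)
The plan is to show that the Wirtinger presentation $\G_\cC$ is obtained from the Zariski–van Kampen presentation of Corollary~\ref{cor:zvk} by a sequence of Tietze transformations, and then to invoke Libgober's result to conclude that not only the groups but the associated $2$-complexes are homotopy equivalent to $\bc^2\setminus\cC$. The geometric heart of the argument is to identify the generators $x_\ell$ (indexed by edges of $\cC_\br$) with meridians built along the real picture, and to organize the braid monodromy of the vertical projection $p$ so that each discriminant point $t_j\in\Delta$ contributes exactly the local relations prescribed by \ref{R1} and \ref{R2}.

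First I would fix the geometric basis. Condition~\ref{W3} guarantees that over $x_0$ the fiber $L$ meets $\cC$ in $d=\deg_y f$ real points, so the free group $\pi_1(L_{x_0}\setminus\cC)$ has a geometric basis $\mu_1,\dots,\mu_d$ read off in real order along $L_\br$. The parallel real plane $H_\varepsilon$ from hypothesis~\ref{thm:main2} provides a disjoint-from-$\cC$ copy of the disk $B$; pushing the base line into $H_\varepsilon$ lets me choose meridians that are honestly small loops encircling the branches of $\cC$ near the real picture, and the simple connectivity in~\ref{thm:main1} ensures these loops, together with the edges of $\cC_\br$, are consistently connected to the base point without ambiguity in the homotopy classes. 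Concretely, to each edge $\ell\in E_\cC$ I assign a meridian $x_\ell$ by transporting a small fiber loop along $\ell$ inside $B_\varepsilon$; the identification $x_\ell\leftrightarrow\mu_i$ at the reference fiber is then forced by \ref{W3}.

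Next I would compute the local contribution of each ramification point. By hypotheses \ref{W1}, \ref{W2}, \ref{W4}, \ref{W5} every point of $R$ is a real ordinary or $\ba_m$ singularity with real branches, so its local braid monodromy $\delta_j$ is, respectively, $\Delta_{m}^2$ or $\sigma_1^{m+1}$, exactly as recorded in Remark~\ref{rem:xrels}. The conjugating braid $\eta_j$ in Corollary~\ref{cor:zvk} is the braid that carries the strands from the reference fiber to the local model at $(t_j,y(t_j))$; its effect is precisely to rewrite the abstract meridians $\mu_i$ as the edge generators $x_\ell$ entering the vertex from the side of $L_\br$. Here condition~\ref{thm:main3} is essential: because every singularity \emph{faces} $L_\br$, the strands approaching each vertex from the $L_\br$-side are exactly the ones the local model twists, so no strand crosses the vertex unaccounted for, and the conjugation $\mu_i^{\delta_j\eta_j}=\mu_i^{\eta_j}$ translates into the relations of \ref{R1}/\ref{R2} with the incoming edges playing the role of the $x_i$ and the outgoing edges the role of the $y_i$. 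The connectedness of each real irreducible component rules out stray components of $\cC_\br$ whose meridians would otherwise require extra consistency relations.

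The main obstacle I expect is proving that \emph{no relations are lost or added} in passing between the two presentations — that the braids $\eta_j$ can be chosen coherently (a single global trivialization of the fibration over $B$, compatible at every vertex) so that each Zariski–van Kampen relation becomes exactly one Wirtinger relation of \eqref{eq:ordinary} or \eqref{eq:double}, and conversely every Wirtinger relation arises this way. This coherence is where \ref{thm:main1}--\ref{thm:main3} must be used together: \ref{thm:main2} trivializes the fibration over the thickened disk $B_\varepsilon$, \ref{thm:main1} guarantees the resulting section is unique up to homotopy so the meridians are well-defined, and \ref{thm:main3} forces the local twists to act only on the ``expected'' strands. I would make this precise by choosing an explicit spanning tree of connections inside $B$, checking the braid $\eta_j$ along each branch of the tree, and verifying that the only surviving relations are the claimed ones via the Tietze moves that eliminate the auxiliary generators $\alpha_j$ and rewrite the $y_i$ in terms of the $x_i$. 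Once the two presentations are matched by such transformations (of types that preserve simple-homotopy type), Libgober's theorem upgrades the group isomorphism to the statement that $\G_\cC$ presents $\pi_1(\bc^2\setminus\cC)$.
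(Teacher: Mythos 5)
Your proposal follows essentially the same route as the paper: meridians for the edges are defined via paths inside $B_\varepsilon$ (well defined by conditions (1)--(2)), and the Zariski--van Kampen presentation is converted into $\G_\cC$ by Tietze transformations of types I and IIa, with condition (3) ensuring each local braid acts exactly on the edge-generators entering the vertex from the $L_\br$ side; the coherence issue you flag as the main obstacle is resolved in the paper by an induction on the vertices ordered by their distance to $L_\br$, introducing the new edge-generators $y=x^{\tilde\delta_j}$ as each vertex is crossed. The only slip is that Libgober's theorem is not needed to conclude that $\G_\cC$ presents $\pi_1(\bc^2\setminus\cC)$ --- the Tietze transformations already give this --- it is invoked only in the subsequent corollary on the homotopy type of the associated CW-complex.
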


\begin{obs}\label{obs:local}
Given a curve of Wirtinger type, the set of points $H_\varepsilon\cap \cC\subset H_\varepsilon$ projected
onto $\br^2$ via the real-part map will be referred to as \emph{obstruction points}.
Before we prove this result, we will describe strategies to determine the position of the obstruction
points based on the real picture $\cC_\br$ in order to check property~\eqref{eq:Be} without actual
computations.
First note that a smooth branch transversal to the vertical line can be locally parametrized by $y=0$
after a real change of coordinates. Therefore a parallel plane $H_\varepsilon$ near $P=(0,0)$ will 
be locally disjoint to $\cC$, since $y=v+\varepsilon \sqrt{-1}=0$ has no solution for $v\in \br$.
Since the ordinary singularities as well as $\ba_{2k+1}$ are a product of smooth branches, this
forces the same local property $H_\varepsilon \cap \cC=\emptyset$ near $P$. The remaining two cases
are either simple vertical tangencies or $\ba_{2k}$. In the simple tangency case $y^2=x$, note that
locally in a ball $B_P$ around~$P$,
$$
\array{rl}
(H_\varepsilon \cap \cC)_P&=\{(u,v+\varepsilon \sqrt{-1})\in B_P \mid u,v\in \br, 
(v+\varepsilon \sqrt{-1})^2=v^2-\varepsilon^2+2v\varepsilon \sqrt{-1}=u\}\\
% &=
% \{(u,v+\varepsilon \sqrt{-1})\in \bc^2 \mid u,v\in \br, v=0, u=-\varepsilon^2\}\\
&=
\{(-\varepsilon^2,\varepsilon \sqrt{-1})\}.
\endarray
$$
Analogously, if $y^2=-x$, then $(H_\varepsilon \cap \cC)_P=\{(\varepsilon^2,\varepsilon \sqrt{-1})\}$.
The position of this obstruction point relative to the curve is depicted in Figure~\ref{fig:tangency-a2k}.

Finally, at an irreducible double singularity of type $\ba_{2k}$ of local equation $y^2=x^{2k+1}$ one 
can check that $(H_\varepsilon \cap \cC)_P=\{(-\varepsilon^{\frac{2}{2k+1}},\varepsilon \sqrt{-1})\}$,
where $\varepsilon^{\frac{2}{2k+1}}$ represents the only real $2k+1$ root of~$\varepsilon^2$. 
The position of this obstruction point relative to the curve is also depicted in Figure~\ref{fig:tangency-a2k}.
\end{obs}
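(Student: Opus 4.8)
The plan is to verify each asserted location by reducing the germ of $\cC$ at the relevant point $P$ to a local normal form adapted to the vertical fibration $p$, and then intersecting that form with $H_\varepsilon$ by a direct substitution. Throughout, write a point of $H_\varepsilon$ as $(u,\,v+\varepsilon\sqrt{-1})$ with $u,v\in\br$. The single mechanism driving every case is that the $x$-coordinate $u$ stays real, so any real power series in $x$ alone takes real values; matching imaginary parts then yields one real equation which, because $\varepsilon\neq 0$, can only be satisfied with $v=0$, after which the real part determines $u$ uniquely.

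First I would dispose of the loci where $H_\varepsilon\cap\cC=\varnothing$ near $P$. By condition (W4) every point other than a simple vertical tangency carries only branches whose tangent is non-vertical, so each such branch is a graph $y=g(x)$ with $g$ real analytic and $g(0)=0$. Substituting gives $v+\varepsilon\sqrt{-1}=g(u)\in\br$, impossible for $\varepsilon\neq 0$; hence there is no local intersection. Since ordinary singularities (distinct non-vertical tangents, by (W5)) and the $\ba_{2k+1}$-points, of local equation $y^2=x^{2k+2}$ with branches $y=\pm x^{k+1}$, are finite unions of such graphs, the same holds there. For the two remaining types I substitute explicitly. At a simple tangency in the form $y^2=x$ one gets $v^2-\varepsilon^2+2v\varepsilon\sqrt{-1}=u$, so $2v\varepsilon=0$ forces $v=0$ and then $u=-\varepsilon^2$, giving the unique local obstruction point $(-\varepsilon^2,\varepsilon\sqrt{-1})$ (and $(\varepsilon^2,\varepsilon\sqrt{-1})$ for $y^2=-x$). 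At an $\ba_{2k}$-point in the form $y^2=x^{2k+1}$ the right-hand side $u^{2k+1}$ is real, so again $v=0$ and $-\varepsilon^2=u^{2k+1}$, whence $u=-\varepsilon^{2/(2k+1)}$ is the unique real $(2k+1)$-st root; this is the claimed point. Uniqueness in a neighbourhood of $P$ shows these are genuinely all the obstruction points there.

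The step I regard as the real content, beyond this bookkeeping, is the legitimacy of passing to the normal forms: the plane $H_\varepsilon$ is fixed in the ambient coordinates and is \emph{not} preserved by an arbitrary real analytic change of variables, so one must check that the reduction does not move the obstruction point to the wrong side of the vertical line through $P$. I would restrict to changes respecting the fibration, $x\mapsto\xi(x)$ and $y\mapsto\eta(x,y)$ with $\xi,\eta$ real and $\partial_y\eta(P)\neq 0$; these keep $x$ real and send $H_\varepsilon$ to the locus with fibrewise imaginary part $\partial_y\eta(x,v)\,\varepsilon+O(\varepsilon^2)$. Thus the imaginary-part equation that forces $v=0$ is preserved up to a nonzero real rescaling, and the leading-order sign of $u$—which is all that Figure~\ref{fig:tangency-a2k} records—is unchanged. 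Making this asymptotic control precise, namely that for $|\varepsilon|$ small the local intersection consists of a single point with the stated leading behaviour in $\varepsilon$, is the one estimate I would write out in full; every other case is settled by the substitution above.
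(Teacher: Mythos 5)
Your proposal is correct and takes essentially the same approach as the paper: the same case split (transversal smooth branches, ordinary and $\ba_{2k+1}$ points, simple tangencies, $\ba_{2k}$ cusps) and the same substitution into $H_\varepsilon$, with the vanishing of the imaginary part forcing $v=0$ and the real part pinning down $u$. Your closing paragraph on the legitimacy of reducing to normal forms addresses a point the paper leaves implicit (it simply takes the local equations to be the normal forms themselves), so that extra care is a sound refinement of, rather than a departure from, the paper's argument.
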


\begin{figure}[ht]
\begin{center}
\begin{tikzpicture}[baseline=0]
\begin{scope}[shift={(6,0)},rotate=180]
\coordinate (O) at (0,0);
\coordinate (P) at (2,1);
\coordinate (Q) at (2,-1);
\node[above] at (O) {$P$};
\node[right=6pt] at (O) {$\star$};
\draw[fill] (O) circle [radius=.05];
\draw (P) to [out=-135,in=0] (O) to [out=0,in=135] (Q);
\end{scope}
\begin{scope}[shift={(1,0)}]
\coordinate (O) at (0,0);
\coordinate (P) at (2,1);
\coordinate (Q) at (2,-1);
\draw[fill=blue,opacity=.4] ($(O)+(.5,0)$) circle [radius=.5];
\node[below left=-2pt] at (O) {$P$};
\node[left=6pt] at (O) {$\star$};
\draw[fill] (O) circle [radius=.05];
\draw (P) to [out=-135,in=0] (O) to [out=0,in=135] (Q);
\end{scope}
\begin{scope}[shift={(-6,0)}]
\coordinate (O) at (0,0);
\coordinate (P) at (1,1);
\coordinate (Q) at (1,-1);
\node[right=2pt] at (O) {$P$};
\node[left=6pt] at (O) {$\star$};
\draw[fill] (O) circle [radius=.05];
\draw (P) to [out=180,in=90] (O) to [out=-90,in=180] (Q);
\end{scope}
\begin{scope}[shift={(-2,0)},rotate=180]
\coordinate (O) at (0,0);
\coordinate (P) at (1,1);
\coordinate (Q) at (1,-1);
\node[left=2pt] at (O) {$P$};
\node[right=6pt] at (O) {$\star$};
\draw[fill] (O) circle [radius=.05];
\draw (P) to [out=180,in=90] (O) to [out=-90,in=180] (Q);
\end{scope}
\end{tikzpicture}
\caption{Obstruction points at vertical tangencies and $\ba_{2k}$-singular points}
\label{fig:tangency-a2k}
\end{center}
\end{figure}

%%to do
%%ejemplo de la cuspide

\begin{obs}
The compactness of~$B$ makes condition~\ref{thm:main2} in Theorem~\ref{thm:main0}
of a combinatorial nature because of the discussion in Remark~\ref{obs:local}.
\end{obs}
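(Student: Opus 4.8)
The plan is to unpack what condition~\ref{thm:main2} demands and then show that the compactness of~$B$, together with Remark~\ref{obs:local}, reduces this analytic condition to a finite check. First I would recall that condition~\ref{thm:main2} asks for the existence of a single $\varepsilon\neq 0$ with $B_\varepsilon\cap\cC=\emptyset$. A priori this is a statement about an uncountable family of points in the complex surface, but Remark~\ref{obs:local} already pins down, for every relevant singular point $P$ of the vertical projection, exactly which obstruction point $(H_\varepsilon\cap\cC)_P$ appears and on which side of the vertical line through~$P$ it lies. So my first step is to observe that the intersection $H_\varepsilon\cap\cC$ consists of finitely many analytic arcs, each emanating from an obstruction point and tending to $\cC_\br$ as $\varepsilon\to 0$; the local models $y^2=\pm x$ and $y^2=x^{2k+1}$ give the leading-order position $(\mp\varepsilon^2,\varepsilon\sqrt{-1})$ and $(-\varepsilon^{2/(2k+1)},\varepsilon\sqrt{-1})$ respectively.

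Next I would argue that, because $B$ is compact and $\cC_\br$ is a fixed real-algebraic set, the condition $B_\varepsilon\cap\cC=\emptyset$ is \emph{stable}: whether or not an obstruction arc meets~$B_\varepsilon$ is governed, for small $\varepsilon$, entirely by whether the limiting obstruction point lies in the interior of~$B$ or in its complement. The key point is that the real-part projection of $H_\varepsilon\cap\cC$ converges, as $\varepsilon\to0$, to the finite set of obstruction points located at distance $O(\varepsilon^2)$ or $O(\varepsilon^{2/(2k+1)})$ from the corresponding vertices, all on the side predicted by the local equation. Hence for $|\varepsilon|$ sufficiently small the membership of each projected obstruction point in~$B$ is determined by a purely combinatorial datum: the position of the vertex relative to the boundary $\partial B$ and the sign of the branch. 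Away from neighborhoods of the singular points, the smooth transversal branches contribute nothing (as already noted in Remark~\ref{obs:local}, $y=v+\varepsilon\sqrt{-1}=0$ has no real solution), so the compact set~$B$ can only be met near the finitely many vertices.

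I would then conclude by phrasing the verification as a finite combinatorial check. Since $B$ is a closed topological disk with piecewise smooth boundary and $\cC_\br\cup L_\br$ is a fixed finite arrangement, one reads off from the real picture, for each obstruction point described in Remark~\ref{obs:local}, whether it falls inside~$B$ or outside. If every obstruction point lies outside~$B$, then for all small enough~$\varepsilon$ one has $B_\varepsilon\cap\cC=\emptyset$, so condition~\ref{thm:main2} holds; if some obstruction point lies in the interior, no choice of small~$\varepsilon$ works on that side, though one may still succeed using the opposite sign of~$\varepsilon$. Either way, the decision depends on a finite amount of discrete information extracted from $\cC_\br$, which is precisely the combinatorial nature asserted.

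The main obstacle I anticipate is controlling the behavior uniformly over all of~$B$ rather than merely locally at each vertex: one must rule out that some far-away portion of the complex curve dips into $B_\varepsilon$ for the chosen small~$\varepsilon$. This is handled by a compactness argument—on the compact set $B\setminus\bigcup_P U_P$, where $U_P$ are small neighborhoods of the vertices, the curve $\cC$ stays a bounded distance from the real plane, so lifting to $H_\varepsilon$ with small $|\varepsilon|$ cannot create new intersections. The delicate bookkeeping is therefore confined to the vertex neighborhoods, where the explicit obstruction-point computations of Remark~\ref{obs:local} apply, and it is the interaction between this local analysis and the global compactness estimate that makes the reduction to a finite check rigorous.
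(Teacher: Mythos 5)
Your argument is correct and is essentially the paper's own justification: the remark carries no separate proof, being precisely the combination of the local analysis of Remark~\ref{obs:local} (obstruction points occur only at vertical tangencies and $\ba_{2k}$-points, on the side given by the local model) with compactness of $B$ to confine $B_\varepsilon\cap\cC$, for small $\varepsilon$, to neighborhoods of those finitely many vertices, reducing condition~\ref{thm:main2} to a finite check on the real picture --- exactly what you spell out. One parenthetical slip: since $\cC$ has a real equation, complex conjugation carries $H_\varepsilon\cap\cC$ onto $H_{-\varepsilon}\cap\cC$ and commutes with the real-part projection, so the obstruction points for $\varepsilon$ and $-\varepsilon$ project to the \emph{same} points of $\br^2$, and switching the sign of $\varepsilon$ can never rescue a configuration in which an obstruction point lies inside~$B$.
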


\begin{ejm}
It is straightforward to check that the curves $\cC:y^2-x^{m+1}=0$ are of Wirtinger type.
We apply Theorem~\ref{thm:main0} by
choosing $B$ as in Figure~\ref{fig:tangency-a2k}.
\end{ejm}

\begin{ejm}
As a simple application of Theorem~\ref{thm:main0} and Remark~\ref{obs:local}, note that the 
affine nodal cubic $\cC=\{y^2=x^2(x+1)\}$ is a curve of Wirtinger type. According to the discussion 
above, there is only an obstruction point (see Figure~\ref{fig:nodalcubic}) and the given $B$ and $L$ 
satisfy the conditions of Theorem~\ref{thm:main0}. Since the diagram $\cC_\br$ contains three edges
and only one vertex (associated with the nodal point $P$), $\G_\cC$ has three generators $x_1,x_2,x_3$ 
and only one set of relations as given in~\eqref{eq:ordinary}:
$$
x_2^2=x_3x_1, [x_2^2,x_1]=[x_2^2,x_2]=1, x_3=x_2,x_1=x_2,
$$
and hence $\pi_1(\bc^2\setminus \cC)=\bz$.

\begin{figure}[ht]
\begin{center}
\begin{subfigure}[b]{.45\textwidth}
\begin{center}
\begin{tikzpicture}[baseline=0,scale=.5]
\begin{scope}[shift={(0,0)}]
\coordinate (O) at (0,0);
\coordinate (P) at (2,4);
\coordinate (Q) at (2,-4);
\coordinate (R) at (-2,0);
\node[above] at (O) {$P$};
\node[left=6pt] at (R) {$\star$};
\draw[fill] (O) circle [radius=.05];
\draw (P) to [out=-100,in=35] (O) to [out=215,in=-90] (R) to [out=90,in=145] (O) to [out=-35,in=100] (Q);
\draw[fill opacity=.2,fill=blue] circle [radius=2];
\node[right=30pt] at (O) {$B$};
\node[above right=12pt] at (R) {$x_2$};
\node[above=2pt] at (.7,.7) {$x_1$};
\node[below=2pt] at (.7,-.7) {$x_3$};
\draw[dashed] (-1.25,-4)--(-1.25,4) node[below right]  {$L$};
\end{scope}
% \begin{scope}[shift={(4,0)}]
% \coordinate (O) at (0,0);
% \coordinate (P) at (2,4);
% \coordinate (Q) at (2,-4);
% \coordinate (R) at (-2,0);
% \node[left=6pt] at (R) {$\star$};
% \draw[fill] (O) circle [radius=.05];
% \draw (P) to [out=-100,in=35] (O) to [out=215,in=-90] (R) to [out=90,in=145] (O) to [out=-35,in=100] (Q);
% \draw[fill opacity=.2,fill=blue] circle [radius=2];
% \draw (-1.5,-2.1) -- (2,2.2);
% \node[right=60pt] at (O) {$B$};
% \node[below] at (O) {$x_4$};
% \node[below] at (2.3,2.3) {$y_1$};
% \node[above right=12pt] at (R) {$x_3$};
% \node[below right] at (P) {$x_1$};
% \node[below=2pt] at (.7,-.7) {$x_5$};
% \draw[fill] (.86,.8) circle [radius=.05];
% \draw[fill] (.15,-.1) circle [radius=.05];
% \node[above]  at (.2,.25) {$x_2$};
% \node[right=2pt]  at (.4,.25) {$y_2$};
% \node[below=2pt] at (-1,-.7) {$y_3$};
%\end{scope}
\end{tikzpicture}
\caption{Nodal cubic}
\label{fig:nodalcubic} 
\end{center}
\end{subfigure}
\begin{subfigure}[b]{.45\textwidth}
\begin{center}
\begin{tikzpicture}[scale=.6,baseline=0]
\begin{scope}[shift={(0,0)}]
\draw (-2,-2) -- (4,4);
\draw (-4,4) -- (2,-2);
\coordinate (O) at (0,0);
\coordinate (O1) at (0,1);
\coordinate (P) at (-2,2);
\coordinate (Q) at (2,2);
\coordinate (P1) at (-3,5);
\coordinate (Q1) at (3,5);
\draw[fill] (O) circle [radius=.05];
\draw[fill] (P) circle [radius=.05];
\draw[fill] (Q) circle [radius=.05];
\draw (O1) to [out=0,in=-135] (Q) to [out=45,in=-95] (Q1);
\draw (O1) to [out=180,in=-45] (P) to [out=135,in=-85] (P1);
\node[left] at (Q1) {$z_3$};
\node[right] at (P1) {$z_1$};
\node[below right] at (4,4) {$x_1$};
\node[below left] at (-4,4) {$y_1$};
\node[above] at (O1) {$z_2$};
\node[below right] at (1,1) {$x_2$};
\node[below left] at (-1,1) {$y_2$};
\node[left] at (-1,-2) {$x_3$};
\node[right] at (1,-2) {$y_3$};
\end{scope}
\end{tikzpicture}
\caption{Parabola and tangent lines}
\label{fig:T442}
\end{center}
\end{subfigure}
\caption{}
\end{center}
\end{figure}

\end{ejm}

\begin{ejm}
Consider the parabola $y=x^2$ together with two parallel lines as in Figure~\ref{fig:T442}.
The union of these irreducible components is an affine curve $\cC$ of Wirtinger type. Choosing 
as $B$ a big enough rectangle centered at the origin containing all singularities and a vertical 
line $L$ placed at the left-most edge of $B$, one can trivially check they satisfy the 
hypotheses of Theorem~\ref{thm:main0}. 

% \begin{center}
% \begin{figure}[ht]
% \begin{tikzpicture}[scale=.5,baseline=0]
% \begin{scope}[shift={(0,0)}]
% \draw (-2,-2) -- (4,4);
% \draw (-4,4) -- (2,-2);
% \coordinate (O) at (0,0);
% \coordinate (O1) at (0,1);
% \coordinate (P) at (-2,2);
% \coordinate (Q) at (2,2);
% \coordinate (P1) at (-3,5);
% \coordinate (Q1) at (3,5);
% \draw[fill] (O) circle [radius=.05];
% \draw[fill] (P) circle [radius=.05];
% \draw[fill] (Q) circle [radius=.05];
% \draw (O1) to [out=0,in=-135] (Q) to [out=45,in=-95] (Q1);
% \draw (O1) to [out=180,in=-45] (P) to [out=135,in=-85] (P1);
% \node[left] at (Q1) {$z_3$};
% \node[right] at (P1) {$z_1$};
% \node[below right] at (4,4) {$x_1$};
% \node[below left] at (-4,4) {$y_1$};
% \node[above] at (O1) {$z_2$};
% \node[below right] at (1,1) {$x_2$};
% \node[below left] at (-1,1) {$y_2$};
% \node[left] at (-1,-2) {$x_3$};
% \node[right] at (1,-2) {$y_3$};
% \end{scope}
% \end{tikzpicture}
% \caption{Parabola and tangent lines}
% \label{fig:T442}
% \end{figure}
% \end{center}

The following is a complete set of relations obtained from the diagram~$\cC_\br$, namely,
$$
\array{lll}
\begin{cases}
(z_1y_1)^2=(y_1z_1)^2\\
z_2=(y_1z_1)^{-1}z_1(y_1z_1)\\
y_2=z_1^{-1}y_1z_1
\end{cases}
&
\begin{cases}
x_3y_2=y_3x_2\\
y_2=y_3\\
[y_2,x_3]=1\\
x_2=y_2^{-1}x_3y_2=x_3
\end{cases}
&
\begin{cases}
(z_2x_2)^2=(x_2z_2)^2\\
z_3=(x_2z_2)^{-1}z_2(x_2z_2)\\
x_1=z_2^{-1}x_2z_2
\end{cases}
\endarray
$$
Using the relations $x=x_2=x_3$, $y=y_2=y_3$, $x_1=z_2^{-1}xz_2$, $z_1=yz_2y^{-1}$
$y_1=z_1yz_1^{-1}=yz_2yz_2^{-1}y^{-1}$, $z_3=xz_2x^{-1}$ the presentation $\G_\cC$
can be reduced to
$$
\pi_1(\bc^2\setminus \cC)=\langle x,y,z_2: xy=yx, (yz_2)^2=(z_2y)^2, (xz_2)^2=(z_2x)^2\rangle
$$
which is the presentation of the Euclidean Artin group~$(4,4,2)$.
\end{ejm}

\section{Proof of Theorem~\ref{thm:main0}}\label{sec:main-thm}
%\mbox{}

\begin{proof}[Proof of Theorem{\rm~\ref{thm:main0}}]
Let us consider a curve of Wirtinger type $\cC$, a topological disk $B$ and a vertical line $L=p^{-1}(t_0)$ 
satisfying the hypotheses. For simplicity, we assume $\varepsilon>0$. The strategy of the proof is to inductively
transform a Zariski-van Kampen presentation of $\pi_1(\bc^2\setminus\cC;P_0)$ into the Wirtinger presentation~$\G_\cC$.

Before we start, a general method to construct loops is described as follows. Let $\ell\in E_\cC$, $p_\ell\in E_\cC$ a 
smooth point, and $\Delta_\ell$ a disk of radius~$\varepsilon$ centered at $p_\ell$ and transversal to $\cC$. Let $q_\ell$ 
be the unique point in $\Delta_\ell\cap B_\varepsilon$. The meridian $\mu_\ell$ is defined taking a path $\rho_\ell$ in 
$B_\varepsilon$ from $P_0$ to $q_\ell$, running $\partial\Delta_\ell$ counterclockwise and coming back to $P_0$ via 
$\rho_\ell^{-1}$. A key remark is that this construction defines a unique meridian $\mu_\ell$ independently of the 
choice of~$\rho_\ell$ and $p_\ell$ by condition~\eqref{eq:Be}.

We will start with an appropriate Zariski-van Kampen presentation for a 
%construct an explicit isomorphism $h:\G_\cC\to\pi_1(\bc^2\setminus\cC;P_0)$ for a 
suitable base point $P_0$ on $L$.
Let us write $L\cap\cC=L_\br\cap \cC_\br=\{p_{\ell_1}=(t_0,y_1),\dots,p_{\ell_d}=(t_0,y_d)\}$ where $p_{\ell_i}$ is a
smooth point in $\ell_i\in E_\cC$ with $y_1>\dots>y_d$ and choose $y_0\in \br$, $y_0\geq y_1$ such that 
$(t_0,y_0)\in B\cap L_\br$. This is possible since $L_\br\cap B$ is an interval, $L_\br\cap \cC_\br$ is a finite set 
of points, $B\cup L_\br\cup \cC_\br$ is simply connected, by condition~\ref{thm:main1}, and hence $L_\br\cap \cC_\br\subset B$. 
The point $P_0=(t_0,y_0+\varepsilon\sqrt{-1})\in B_\varepsilon\cap L$ will be taken as a base point.

\begin{figure}[ht]
\begin{center}
\begin{tikzpicture}[vertice/.style={draw,circle,fill,minimum size=0.1cm,inner sep=0}]
\tikzset{%
  suma/.style args={#1 and #2}{to path={%
 ($(\tikztostart)!-#1!(\tikztotarget)$)--($(\tikztotarget)!-#2!(\tikztostart)$)%
  \tikztonodes}}
} 
\tikzset{flecha/.style={decoration={
  markings,
  mark=at position #1 with  {\arrow[scale=2]{>}}},postaction={decorate}}}
\def\radio{.45}
\coordinate (A) at (3,0);
\coordinate (B) at (1,0);
\coordinate (C) at (0,0);
\coordinate (D) at (-1,0);
\coordinate (E) at (-2,0);
\coordinate (F) at (-3,0);
\coordinate (G) at (-4,0);
\coordinate (Y) at (0,1);

\draw[line width=1.2] ($(A)+\radio*(Y)$)--($(D)+\radio*(Y)$);
\draw[line width=1.2,dotted] ($(D)+\radio*(Y)$)--($(E)+\radio*(Y)$);
\draw[line width=1.2] ($(E)+\radio*(Y)$)--($(G)+\radio*(Y)$);
\draw[flecha=.1] (B) circle [radius=\radio];
\draw[flecha=.6] (C) circle [radius=\radio];
\draw[flecha=.1] (F) circle [radius=\radio];
\draw[flecha=.6] (G) circle [radius=\radio];

\node[above=12pt] at (B) {$q_1$};
\node[above=12pt] at (C) {$q_2$};
\node[above=12pt] at (F) {$q_{d-1}$};
\node[above=12pt] at (G) {$q_d$};

\node[vertice,color=gray] at ($(A)+\radio*(Y)$) {};
\node[right,above=-5pt] at (A) {$P_0$};
\node[vertice] at (B) {};
\node[right=12pt] at (B) {$\mu_{\ell_1}$};
\node[below] at (B) {$p_1$};
\node[vertice] at (C) {};
\node[left=12pt] at (C) {$\mu_{\ell_2}$};
\node[below] at (C) {$p_2$};

\node[] at ($.7*(D)+.3*(E)$){$\dots$};

\node[vertice] at (F) {};
\node[right=12pt] at (F) {$\mu_{\ell_{d-1}}$};
\node[below] at (F) {$p_{d-1}$};
\node[vertice] at (G) {};
\node[left=12pt] at (G) {$\mu_{\ell_d}$};
\node[below] at (G) {$p_{d}$};
\end{tikzpicture}
\caption{Generators in the fiber}
\label{fig:base_fibra}
\end{center}
\end{figure}

As was mentioned above, the idea of the proof is to transform the Zariski-van Kampen presentation of 
$\pi_1(\bc^2\setminus \cC)$ into the Wirtinger presentation $\G_\cC$. To simplify this procedure one can transform 
slightly the Wirtinger presentation by considering an extended diagram where the vertices in $\tilde\cV_\cC$ contain 
$\cV_\cC$ and the vertical tangencies of $\cC$ and considering such points as $\ba_0$-singular points. 
The resulting relation is provided in~\eqref{eq:double} for $m=0$, i.e. the two generators coincide. 
The set of resulting edges will be denoted by~$\tilde E_\cC$.

Let us start from the Zariski-van Kampen presentation $\G_0$ of Corollary~\ref{cor:zvk} generated by 
%%%%%%%%CAMBIO
the meridians
%%%%%%%%CAMBIO
$\mu_{\ell_1},\dots,\mu_{\ell_d}$ as in Figure~\ref{fig:base_fibra},
where $\ell_j$ is the edge containing $p_j$. Recall that the relators in $\G_0$ correspond 
to the singular points of the projection~$p|_\cC$, i.e.~with the vertices of the modified Wirtinger presentation 
$\tilde\G_{\cC}$. Let us order the set $\tilde\cV_\cC=\{P_1,\dots,P_r\}$ of singular points of the projection~$p|_\cC$ 
by its distance to~$L_\br$. Denote by $L_j$, $j=1,\dots,r$ (resp.~$L_0$) the vertical line 
containing $P_j$ (resp.~$L_\br$).
%and by $TL_j$ a tubular neighborhood of $L_j$. 
%%%%%CAMBIO
An inductive procedure will be presented 
to transform $\G_0$ into $\G_r=\tilde\G_\cC$ using only Tietze transformations of type~I and IIa (without IIb~\cite{dun:76}, i.e. the homotopy type is preserved).
%%%%%CAMBIO
At each step $j\in\{0,\dots,r\}$, a presentation will be given whose generators are associated with the edges in  
$\tilde E_\cC \cap (L_0\cup\dots\cup L_j) $ %(\cup_{k=0}^j TL_k)$ 
and whose relations associated with $P_1,\dots,P_j$ coincide with those 
of~$\tilde\G_\cC$ while the ones associated with
$P_{j+1},\dots,P_r$ are still those of Zariski-van Kampen presentation.

% We are going to prove by induction that we can
% add generators and relators to the Zariski-van Kampen presentation such that all the edges
% adjacent to $L_\br$ and $P_1,\dots, P_j$, provide generators
% and the relations are in two packages. Those corresponding to $P_i$, $i>j$ are unchanged;
% those corresponding to $P_i$, $i\leq j$, are in bijection to those of \ref{R1} and~\ref{R2}.

For $j=0$, the result is trivial using $\G_0$. Assume $\G_j$ is constructed and consider the point $P_{j+1}$
and its associated relations. The only new edges in 
$\tilde E_\cC\cap %T
L_{j+1}$ might come from adjacent edges to $P_{j+1}$.
If $P_{j+1}$ is of type $\ba_{2k}$ and since $P_{j+1}$ faces $L_\br$ (by condition~\ref{thm:main3}), no new edges arise.
Let $x_{\ell'}$ be any generator associated with an edge~$\ell'$ adjacent to $P_{j+1}$, then $x_{\ell'}=x_{\ell}^{\eta_j}$ for some generator 
$x_{\ell}$ in $\G_0$, see~Corollary~\ref{cor:zvk}. 
The local braid $\delta_j$ described before Corollary~\ref{cor:zvk} is 
%a conjugate of 
$\sigma_1^{2k+1}$. Hence relation~\ref{R2} produces $x_{\ell'}=x_{\ell'}^{\delta_j}$, which becomes 
$x_{\ell}^{\eta_j}=x_{\ell}^{\delta_j\eta_j}$, 
%$x_{\ell}^{\tau_j^{-1}}=x_{\ell}^{\tau_j^{-1}\delta_j}$, 
that is, 
% \begin{equation}
% \label{eq:Pj}
% x_{\ell}=x_{\ell}^{\tau_j^{-1}\delta_j\tau_j}
% \end{equation}
%which is 
the Zariski-van Kampen relation associated with~$P_{j+1}$,
which is replaced by the Wirtinger relation~\ref{R2} in~$G_{j+1}$.
% This way, one can replace~\eqref{eq:Pj} by its 
% corresponding Wirtinger relation~$x=x^{\delta_j}$ in the presentation~$\G_j$.
% to the one of~\ref{R2}, since the generators appearing in each one are simultaneouly conjugated
% (even equal if the process is done more carefully). 

For the remaining cases ($\ba_{2k+1}$ and ordinary), there are new edges in $\tilde E_\cC$ adjacent to $L_{j+1}$ and the local braids 
$\delta_j$ are squares, say $\delta_j=\tilde\delta_j^2$. As above, the relations involving the Zariski-van Kampen relation can 
be analogously replaced in $\G_j$ by those in~\ref{R1} and~\ref{R2} involving only the old edges (denoted by $x$'s). 
As above let $x$ be any generator associated with an edge adjacent to $P_{j+1}$ and facing $L_\br$ and let $y$ be the corresponding 
new edge on the same irreducible component. We will further transform $\G_j$ by adding a new generator $y$ and a relation
$y=x^{\tilde\delta_j}$ (see Remark~\ref{rem:xrels}). This process is continued until $\G_r=\tilde\G_\cC$ is obtained.
\end{proof}

\begin{obs}
As a consequence of the beginning of the proof, a homomorphism $h:\G_\cC\to \pi_1(\bc^2\setminus \cC)$ 
can be defined as follows. Given $x_\ell$ the generator of $\G_\cC$ corresponding to~$\ell\in E_\cC$ as in 
\S\,\ref{sec:wirtinger}, then $h(x_\ell):=\mu_\ell$. The rest of the proof shows that $h$ is in fact an isomorphism.
\end{obs}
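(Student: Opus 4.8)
The plan is to establish exactly what the statement asserts, namely that the assignment $x_\ell\mapsto\mu_\ell$ determines a \emph{well-defined} group homomorphism (the isomorphism being deferred to the inductive Tietze argument already given). By the universal property of a group presentation (von Dyck's theorem), since $\G_\cC$ is generated by $\{x_\ell\mid\ell\in E_\cC\}$ subject to the relations~\ref{R1} and~\ref{R2} attached to the vertices $P\in\cV_\cC$, the map $h$ exists precisely when each defining relator is sent to the identity of $\pi_1(\bc^2\setminus\cC;P_0)$, that is, when the elements $\mu_\ell$ satisfy all the relations~\eqref{eq:ordinary} and~\eqref{eq:double}. Two things therefore have to be checked: first that each $\mu_\ell$ is a genuine, unambiguous element of the fundamental group, and second that these elements satisfy the Wirtinger relators. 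The first point is exactly the meridian construction recalled at the beginning of the proof of Theorem~\ref{thm:main0}: for a smooth point $p_\ell$ of an edge $\ell$ the loop built from a path $\rho_\ell\subset B_\varepsilon$, the boundary $\partial\Delta_\ell$, and $\rho_\ell^{-1}$ has a homotopy class independent of all auxiliary choices, because condition~\eqref{eq:Be} makes $B_\varepsilon$ a simply connected subset of $\bc^2\setminus\cC$ in which any two such paths are homotopic. Hence $\mu_\ell$ is well defined for every $\ell\in E_\cC$.

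For the relations I would argue one vertex at a time, localising each relator near the fiber over $t_P:=p(P)$. By the facing condition~\ref{thm:main3} the edges adjacent to $P$ split into a near-side family $x_1,\dots,x_m$ facing $L_\br$ and, at ordinary or $\ba_{2k+1}$ points, a far-side family $y_1,\dots,y_m$; choosing representatives of the corresponding meridians based at points of $B_\varepsilon$ near $P$, the relators of~\ref{R1} and~\ref{R2} separate, as recorded in Remark~\ref{rem:xrels}, into relations among the $x_i$ alone and relations expressing each $y_i$ as a conjugate of $x_i$. The relations among the $x_i$ (such as $[\bar x_m,x_j]=1$ or $(x_1x_2)^{\frac{m+1}{2}}=(x_2x_1)^{\frac{m+1}{2}}$) are precisely the local braid-monodromy relations of Remark~\ref{rem:zvkrels}: they say that the full local braid $\delta_P$ fixes the relevant meridians, which holds in $\pi_1(\bc^2\setminus\cC)$ because the loop around $t_P$ is null-homotopic there. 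They are thus already valid, being among the Zariski--van Kampen relations of Corollary~\ref{cor:zvk}.

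It then remains to verify the relations of the form $y_i=x_i^{\tilde\delta_P}$, where $\tilde\delta_P$ is a square root of the local monodromy, $\delta_P=\tilde\delta_P^2$ (so $\tilde\delta_P=\Delta_m$ at an ordinary point and a power of $\sigma_1$ at an $\ba_{4k-1}$ point). Here I would transport the near-side meridian $\mu_{x_i}$ to the far side along a path inside $B_\varepsilon$ whose base projection crosses $t_P$ at the $\varepsilon$-shifted height; condition~\eqref{eq:Be} guarantees this path avoids $\cC$, and, the sign of $\varepsilon$ being fixed, the transport realises exactly the half-twist $\tilde\delta_P$, giving $\mu_{y_i}=\mu_{x_i}^{\tilde\delta_P}$ as required. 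Where $H_\varepsilon$ stays locally disjoint from $\cC$ near $P$, which by Remark~\ref{obs:local} is the case at the ordinary and $\ba_{2k+1}$ points carrying these relations, the crossing path is unobstructed; the obstruction points computed in Remark~\ref{obs:local} enter only to confirm that the transport can be completed inside $B_\varepsilon$ with the prescribed orientation.

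The main obstacle I expect is precisely this last matching: confirming that the two independently constructed $H_\varepsilon$-meridians $\mu_{x_i}$ and $\mu_{y_i}$ are joined by the braid $\tilde\delta_P$ and not by a competing conjugate. This amounts to checking, local type by local type, that the crossing path induces exactly the square root of the full monodromy, with the sign pinned down by $\varepsilon>0$ and by the position of the curve relative to $H_\varepsilon$. Once this is settled for each of the finitely many admissible local types, von Dyck's theorem yields the well-defined homomorphism $h$, and the statement that $h$ is moreover an isomorphism is the content of the inductive Tietze-transformation argument carried out above.
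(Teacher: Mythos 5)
Your proposal is correct, but it assembles the argument differently from the paper, so a comparison is worthwhile. The paper never verifies the Wirtinger relators directly: in the proof of Theorem~\ref{thm:main0} the homomorphism $h$ and its bijectivity are obtained simultaneously, as the end product of the inductive Tietze-transformation process that converts the Zariski--van Kampen presentation of Corollary~\ref{cor:zvk} into $\G_\cC$ while tracking that the generators remain the meridians $\mu_\ell$; the Remark then simply reads off $h$ from that proof. You instead construct $h$ a priori: well-definedness of each $\mu_\ell$ from condition~\eqref{eq:Be} (exactly as the paper does), then von Dyck's theorem plus a vertex-by-vertex verification that the $\mu_\ell$ satisfy \eqref{eq:ordinary} and \eqref{eq:double}, deferring only bijectivity to the induction. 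What your route buys is a definition of $h$ independent of the inductive bookkeeping; what it costs is that the whole burden falls on the local statement you correctly single out as the main obstacle: that transporting a near-side meridian across the singular fiber inside $H_\varepsilon$ realizes exactly the half-twist $\tilde\delta_j$ with $\delta_j=\tilde\delta_j^2$, the sign being pinned down by $\varepsilon>0$. Two comments on that point. First, this is not an additional gap relative to the paper: it is precisely the content of Remark~\ref{rem:xrels} (note the inverses there, $y_i=x_i^{\Delta_m^{-1}}$ and $y_i=x_i^{\sigma_1^{-2k}}$, so your sign convention must be checked against it), and it enters the paper's induction equally without a detailed proof, at the step where the generator $y$ is adjoined together with the relation $y=x^{\tilde\delta_j}$. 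Second, your claim that the relations among the $x_i$ alone are ``already valid, being among the Zariski--van Kampen relations'' silently uses a transport fact of the same nature: the ZVK relators concern the fiber generators conjugated by the tail braids $\eta_j$, so to convert them into relations among the edge meridians adjacent to a vertex $P$ you need that conjugation by a common connecting path inside $B_\varepsilon$ carries one family to the other --- your localization sentence covers this, but that identification (which is where conditions~\ref{thm:main2} and~\ref{thm:main3} do real work) deserves to be stated explicitly rather than absorbed into ``choosing representatives''.
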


\begin{cor}
The Wirtinger presentation of a curve of Wirtinger type satisfying the conditions of Theorem~\ref{thm:main0} has
the homotopy type of its associated CW-complex.
\end{cor}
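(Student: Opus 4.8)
The plan is to upgrade the proof of Theorem~\ref{thm:main0} from an isomorphism of groups to an equivalence of homotopy types, exploiting the fact that the transformation carried out there was deliberately restricted to Tietze moves that preserve the homotopy type of the associated presentation complex. For a presentation $\mathcal{P}$ write $|\mathcal{P}|$ for its canonical $2$-complex: one $0$-cell, one oriented $1$-cell per generator, and one $2$-cell per relator attached along the corresponding word. The claim is precisely that $|\G_\cC|$ is homotopy equivalent to $\bc^2\setminus\cC$.

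First I would invoke the result of Libgober recalled in Section~\ref{subsec:zvk} (see~\cite{Libgober-homotopytype}): the Zariski-van Kampen presentation $\G_0$ of Corollary~\ref{cor:zvk} is not merely a presentation of $\pi_1(\bc^2\setminus\cC)$ but satisfies $|\G_0|\simeq\bc^2\setminus\cC$. This provides the base of the homotopy chain. Next I would recall the standard fact (\cite{dun:76}) that Tietze transformations of type~I (insertion or deletion of a redundant relator) and type~IIa (insertion or deletion of a generator together with a word defining it) each induce a homotopy equivalence of the associated $2$-complexes; only type~IIb moves can alter the homotopy type. Since the proof of Theorem~\ref{thm:main0} transforms $\G_0$ into the extended presentation $\tilde\G_\cC$ using exclusively type~I and type~IIa moves, every intermediate complex $|\G_j|$ is homotopy equivalent to $|\G_0|$, whence $|\tilde\G_\cC|\simeq\bc^2\setminus\cC$.

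It then remains to descend from $\tilde\G_\cC$ to $\G_\cC$. The extended presentation differs from $\G_\cC$ only by the auxiliary vertices placed at the vertical tangencies and treated as $\ba_0$-points; each such vertex contributes a generator introduced together with the relation identifying it with the adjacent edge (the $m=0$ case of~\eqref{eq:double}). Deleting these generators one at a time is a type~IIa move, so $|\G_\cC|\simeq|\tilde\G_\cC|$, and concatenating the equivalences yields $|\G_\cC|\simeq\bc^2\setminus\cC$, as desired.

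The hard part, or rather the only part that must be checked with care, is not conceptual but bookkeeping: I must verify move-by-move that the inductive construction in the proof of Theorem~\ref{thm:main0} never secretly requires a type~IIb transformation. Concretely, at each vertex $P_{j+1}$ one must confirm that replacing a Zariski-van Kampen relator by the Wirtinger relator of~\ref{R1} or~\ref{R2} is effected by deleting a consequence and inserting a consequence (type~I), and that the new edges are always adjoined via a defining relation $y=x^{\tilde\delta_j}$ (type~IIa) rather than imposed as an extra constraint on pre-existing generators. Granting the hypotheses of Theorem~\ref{thm:main0} --- in particular that every singularity faces $L_\br$, which is exactly what guarantees each new meridian can be expressed as a conjugate of an old one --- this verification is routine, and the corollary follows.
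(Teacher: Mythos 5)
Your proposal is correct and follows essentially the same route as the paper: invoke Libgober's result that the Zariski--van Kampen presentation complex has the homotopy type of the complement, and then observe that the proof of Theorem~\ref{thm:main0} only uses Tietze moves that preserve the homotopy type of the presentation $2$-complex. The paper's own proof is just a terser version of this (it additionally remarks that Libgober's argument does not actually need transversality with the line at infinity, only the absence of vertical asymptotes), so your more detailed bookkeeping, including the descent from $\tilde\G_\cC$ to $\G_\cC$, is a faithful expansion of the intended argument.
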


\begin{proof}
Since Tietze transformations of type III are used in the proof of Theorem~\ref{thm:main0}, this corollary 
is a consequence of the proof of the main result in~\cite{Libgober-homotopytype}, where the transversality 
with the line at infinity is not needed in his own proof. Incidentally, despite all the strong genericity conditions
stated in his result, only the non-existence of vertical asymptotes is actually required.
\end{proof}

The conditions of Theorem~\ref{thm:main0} are sufficient, but not necessary. 
The following examples illustrate that the conditions
in the statement of this theorem are not only technical.

\begin{ejm}
Let~$\cC$ be the curve defined by $f(x,y)=y^{3}-  y^{2} + 10 x^{2} y + x^{3}$.
A sketch of its real picture is in Figure~\ref{fig:cusp_tan}. Note that  condition
\ref{thm:main3} in Theorem~\ref{thm:main0} is not fulfilled. It is straightforward to see that 
$\G_\cC=\langle x_1,x_2\mid x_1\cdot x_2\cdot x_1=x_2\cdot x_1\cdot x_2\rangle$ while 
$\pi_1(\bc^2\setminus\cC)\cong\bz$ and hence $\pi_1(\bc^2\setminus\cC)$ and $\G_\cC$
are not isomorphic.
\end{ejm}

\begin{ejm}\label{ejm:cardiode}
Let~$\cC$ be the cardioid curve~$\cC$ defined by $f(x,y)=(y^2+x^2-2x)^2- 4(x^2+y^2)$,
see Figure~\ref{fig:cardioide}. It is not possible to find a simply connected
region~$B$ satisfying the hypotheses of Theorem~\ref{thm:main0}
(because of the obstruction point close to the cusp). It is straightforward to see that 
$\G_\cC\cong\bz$. The projective curve is the tricuspidal  quartic curve
(with two cusps at infinity), which has a non-abelian fundamental group (as proved in~\cite{zr:29}).

\begin{figure}[ht]
\begin{center}
\begin{subfigure}[b]{.45\textwidth} 
\begin{center}
\begin{tikzpicture}[scale=.8,baseline=0,vertice/.style={draw,circle,fill,minimum size=0.15cm,inner sep=0}]
\draw[fill=blue,fill opacity=.2,] (1.1,-1)--(1.1,2)--(-2.1,2)--(-2.1,-1)--(-.9,-1)--(-.9,1)--(-.1,1)--(-.1,-1)--cycle;
\draw (2,-1) to[out=135,in=0] (0,0)
to[out=0,in=-90] (1,1)
to[out=90,in=90] (-2,1)
to[out=-90,in=90] (-1,0)
to[out=-90,in=45] (-2.5,-1);
\node[right] at (1,1) {$\star$};
\node[left] at (0,0) {$\star$};
\node[right] at (-1,0) {$\star$};
\node[left] at (-2,1) {$\star$};
\node[vertice] at (0,0) {};
\node at (.5,-.3) {$x_2$};
\node at (.5,.5) {$x_1$};
\node at (0,-1.75) {};
\end{tikzpicture}
\caption{Cuspidal cubic generic at infinity}
\label{fig:cusp_tan}\end{center}
\end{subfigure}
\begin{subfigure}[b]{.45\textwidth} 
\begin{center}
\begin{tikzpicture}[scale=.8,baseline=0,vertice/.style={draw,circle,fill,minimum size=0.15cm,inner sep=0}]
\begin{scope}[xshift=1cm]
\draw[fill=blue,fill opacity=.2,] (1.1,-1.75)--(1.1,1.75)--(-2.1,1.75)--(-2.1,-1.75)--cycle;
\draw (1,-1) to[out=90,in=0] (0,0)
to[out=0,in=-90] (1,1)
to[out=90,in=90] (-2,0)
to[out=-90,in=-90] (1,-1);
\node[right] at (1,1) {$\star$};
\node[left] at (0,0) {$\star$};
\node[right] at (1,-1) {$\star$};
\node[left] at (-2,0) {$\star$};
\node[vertice] at (0,0) {};
\node at (.5,-.5) {$x_2$};
\node at (.5,.5) {$x_1$};
\end{scope}
\end{tikzpicture}
\caption{Cardioid}
\label{fig:cardioide}\end{center}
\end{subfigure}
\caption{}
\end{center}
\end{figure}

\end{ejm}

\section{Artin groups and Hypocycloids}\label{sec:hypocycloids}
% \subsection{Artin Groups}
% \mbox{}
In this section Artin groups and basic properties of hypocycloids will be recalled in order to state 
and prove the main Theorem on the fundamental group of hypocycloid $\cC_{k,k-1}$ curves. There are 
many conventions to define Artin groups, but for our purpose the Dynkin-diagram convention will be 
more suitable.

\begin{dfn}
Let $\Gamma$ be a graph, the Artin group associated to $\Gamma=(V,E)$ is defined as group 
$G_\Gamma$ generated by the vertices~$v\in V$ of $\Gamma$ with the following relations: 
\begin{equation}
\label{eq:artin}
v\cdot w\cdot v=w\cdot v\cdot w \text{ if } \{v,w\}\in E,\quad \text{ and } [v,w]=1 \text{ otherwise.}
\end{equation}
\end{dfn}

\begin{ejm}
Let $\G_N:=G_{\tilde A_N}$ be the Artin group of an $N$-gon (an affine Dynkin diagram $\tilde A_N$). 
According to~\eqref{eq:artin} a presentation of $\G_N$ can be written as
% \[
% \!\left\langle\!
% \underset{-k<j<k}{x_j}\!
% \left|
% \underset{-k<j<k-1}{x_j\!\cdot\! x_{j+1}\!\cdot\! x_j\!\!=\!x_{j+1}\!\cdot\! x_j\!\cdot\! x_{j+1}},
% x_{k-1}\!\cdot\! x_{1-k}\!\cdot\! x_{k-1}\!\!=\! x_{1-k}\!\cdot\! x_{k-1}\!\cdot\! x_{1-k},\!\!\!\!\!\!\!\!\!\!\!\!
% \underset{-k\leq i-1<j<k,\{i,j\}\neq\{\pm(k-1)\}}{x_i\cdot x_j=x_j\cdot x_i}\!
% \right.
% \right\rangle.
% \]
\[
\left\langle
x_j, j\in \bz_N
\mid 
x_i\cdot x_{i+1}\cdot x_i=x_{i+1}\cdot x_i\cdot x_{i+1}, \quad 
x_i\cdot x_j = x_j\cdot x_i \text{ for } |i-j|\neq 1
\right\rangle,
\]
where the subindices are considered in $\bz_N$ and $|i-j|\neq 1$ means $i-j\not\equiv \pm 1 \mod N$.

Consider $\bz/2=\langle t\mid t^2=1\rangle$ acting on $\G_N$ as $x_j^{t}:=x_{-j}$. 
In the special case $N=2k-1$, it is straightforward to check that the semidirect product 
$\G_N\rtimes\bz/2$ admits a presentation generated by $t$, $x_0,\dots,x_{k-1}$ and whose relations are:
\begin{enumerate}
\enet{(SD\arabic{enumi})} 
\item\label{sd1} $t^2=1$;
\item $x_j\!\cdot\! x_{j+1}\!\cdot\! x_j\!=\!x_{j+1}\!\cdot\! x_j\!\cdot\! x_{j+1}$ for
$0\leq j<k-1$;
\item $(x_{k-1}\!\cdot\! t)^3\!=\! (t\!\cdot\! x_{k-1})^3$;
\item $[x_0,x_j]=1$ for $1<j\leq k-1$;
\item $[x_i,x_j]=1$ for $0<i,j\leq k-1$ and $j-i>1$;
\item\label{sd6} $[x_i,t\cdot x_j\cdot t]=1$ for $0<i\leq j\leq k-1$ and $(i,j)\neq (k-1,k-1)$.
%%q
%%which relation becomes $x_0x_{-1}x_0=x_{-1}x_0x_{-1}$?
%%watch out for subindices $i\in \bz_N$
\end{enumerate}

\end{ejm}

% \subsection{Hypocloids}
% \mbox{}

Let us recall some of the main properties of the hypocycloids. We follow the notation of~\cite{ac} 
--\,see~\cite{morley:91} for details too. A hypocycloid is a real curve associated with each pair $k,\ell\in\bz$ 
such that $0<\ell<k$ and $\gcd(k,\ell)=1$. Consider $N=k+\ell$ and any pair of positive integers $r$, $R$ 
such that $\frac{r}{R}=\frac{\ell}{N}$ (or $\frac{k}{N}$), the hypocycloid is the real curve obtained as 
the trace of a fixed point on a circumference of radius $r$ when rolling inside a circumference of radius~$R$.
This real curve admits an algebraic equation and thus one can consider its complexification $\cC_{k,\ell}$ 
as the complex curve in $\bc^2$ defined by this algebraic equation.

% it is the curve obtained
% as the roulette of a circumference of radius~$r$ inside a circumference of radius~$R$,
% where $\frac{r}{R}=\frac{\ell}{N}$ (or $\frac{k}{N}$, the obtained curve is the same one).

The complex curve $\cC_{k,\ell}$ is rational and has degree~$2k$ and its projective closure 
contains two points at infinity --\,the so-called concyclic points. 
As a summary of its algebraic properties:
\begin{enumerate}
\enet{(C\arabic{enumi})} 
\item $\cC_{k,\ell}$ has $N$ ordinary cuspidal singular points.
\item $\cC_{k,\ell}$ has $N(k-2)$ ordinary double points. In its classical presentation,
$N(\ell-1)$ of them are real points with real tangent lines while the other $N(k-\ell-1)$ are 
complex.
\item The two points at infinity of $\bar{\cC}_{k,\ell}$ have local equations topologically
equivalent to $u^{k-\ell}+v^{k}=0$ (tangent to the line at infinity with contact order~$k$).
\end{enumerate}

%%%%%%%%%%%%%%CAMBIO
These properties are classical and imply that the curve is rational, see e.g.~\cite{ac} for a modern 
description, precise formul{\ae}, and parametrizations.
%%%%%%%%%%%%%%CAMBIO

Let us consider equations such that a point in the real axis has a vertical tangency. 
Note that $p_{|\cC_{k,\ell}}:\cC_{k,\ell}\to\bc$ is a proper $2k$-fold branched covering, extending 
to $\bar{p}_|:\bar{\cC}_{k,\ell}\to\bp^1\equiv\mathbb{C}\cup\{\infty\}$,
where $\bar{\cC}_{k,\ell}$ is the normalization of its projective closure.
Since the curve is rational, $\bar p_|$ has $2(2k-1)$ points of ramification, counted with multiplicity. 
Two of them lie at~$\infty$, each one with multiplicity~$k-1$. Since no tangent line to the cuspidal 
points is vertical, each one of the $N$ cusps contributes with multiplicity~$1$. 
% and there must be $N$ more ramification points (of multiplicity~$1$). 
The remaining multiplicity accounts for the amount simple tangencies of the projection, namely,
\[
2(2k-1)-2(k-1)-(k+\ell)=k-\ell.
\]
Whenever $N$ (and hence $k-\ell$) is even only two of them are real, while in the odd case exactly one 
is real, leaving $k-\ell-1$ of them in pairs of complex conjugated tangencies. Thus, in the special 
case $\ell=k-1$ there must be only one vertical non-transversal line, which has to be real, $N$~lines through
the cusps, and the vertical lines passing through the nodes (all of them real). Since the horizontal 
axis is a symmetry axis, it intersects the curve --\,tangentially\,-- at a cusp, at the $k-2$ nodes, and
at the point of vertical tangency. Hence the nodes are in $k-2+\frac{N-1}{2}(k-2)=(k-2)k$ vertical lines.
%\subsection{Hypocycloids of type \texorpdfstring{$\cC_{k,k-1}$}{C\_k,k-1}}
%Let us study the hypocycloids of type $\cC_{k,k-1}$.
%From now on we will concentrate on hypocycloids $\cC_{k,k-1}$.
Despite the fact that the real picture of $\cC_{k,k-1}$ contains all the topological information of its 
embedding in~$\bc^2$, Theorem~\ref{thm:main0} cannot be applied directly since it does not 
satisfy~\ref{thm:main3}. However its quotient by the horizontal reflection will.
% We are going to use the symmetry properties to avoid this problem.

\begin{figure}[ht]
\begin{center}
\begin{tikzpicture}[scale=.8,vertice/.style={draw,circle,fill,minimum size=0.2cm,inner sep=0}]
% \draw[thick,black] (0,0) circle (8cm) node [above left=45pt] {$C$};
% \draw[thick,black] (45:6) circle (2cm) node [above left=18pt] {$S$};
% \node[vertice] at (45:4) {};
% \node[below left] at (45:4) {$P$};
% \draw (0,0)--(-30:8) node[pos=.9,above] {$R$};
% \draw (45:6)--(45:8) node[pos=.3,above] {$r$};
% \draw [line width=2pt] plot [parametric,domain=0:pi/4]
%         ({8*cos(deg(\x))^3},{8*sin(deg(\x))^3});
\begin{scope}[scale=2]
\def\xk{2}
\def\xl{1}
\draw [line width=1pt] plot [parametric,domain=0:2*pi,samples=100]
        ({(\xk*cos(deg(\xl*\x))+\xl*cos(deg(\xk*\x)))/(\xk+\xl)},
{(\xk*sin(deg(\xl*\x))-\xl*sin(deg(\xk*\x)))/(\xk+\xl)});
\end{scope}
\begin{scope}[xshift=4cm,scale=2]
\def\xk{3}
\def\xl{2}
\draw [line width=1pt] plot [parametric,domain=0:2*pi,samples=100]
        ({(\xk*cos(deg(\xl*\x))+\xl*cos(deg(\xk*\x)))/(\xk+\xl)},
{(\xk*sin(deg(\xl*\x))-\xl*sin(deg(\xk*\x)))/(\xk+\xl)});
\end{scope}
\begin{scope}[xshift=8cm,scale=2]
\def\xk{4}
\def\xl{3}
\draw [line width=1pt] plot [parametric,domain=0:2*pi,samples=100]
        ({(\xk*cos(deg(\xl*\x))+\xl*cos(deg(\xk*\x)))/(\xk+\xl)},
{(\xk*sin(deg(\xl*\x))-\xl*sin(deg(\xk*\x)))/(\xk+\xl)});
\end{scope}

\end{tikzpicture}

\caption{Curves $\cC_{k,k-1}$ for $k=2,3,4$.}
\label{fig:hypo}\end{center}
\end{figure}

The equation $f(x,y)=0$ of $\cC_{k,k-1}$ satisfies $f(x,y)=g(x,y^2)$
for some $g\in\bc[x,y]$. The curve $\cD_{k,k-1}$ 
defined by $h(x,y):=y g(x,y)$ will do the trick.

\begin{lema}
The curve $\cD_{k,k-1}$ satisfies the hypotheses of Theorem{\rm~\ref{thm:main0}}.
\end{lema}

\begin{proof}
Note that the projection change the local type of branches intersecting the horizontal line,
namely, it converts nodes into tangencies, branches with vertical tangencies into transversal 
branches with respect to the vertical line, and 
%non-vertical and a 
cusps into inflection points, see Figure~\ref{fig:cociente}. A topological disk~$B$ can be chosen
as in Figure~\ref{fig:cociente}.
%%q
%%a band?
% as the union of a small rectangle in the negative $x$-half plane and the image by 
% $(x,y)\mapsto (x,y^2)$ of the closed disk containing~$(\cC_{k,k-1})_\br$.
%(and its cusps at the boundary). 
The line~$L$ is close to the node in the horizontal axis.
Since there is no vertical tangency and all the cusps are facing outwards, the result follows.
\end{proof}

Let us label the edges of the non-linear component $\cD$ of the curve~$\cD_{k,k-1}$.
Its real part $\cD_\br$ is the image of a map $\br\to\br^2$,
starting from the negative $x$-half plane (to the image
of the transversal intersection to the horizontal line)
to the part after the inflection point.

In order to label the edges we follow some conventions. First, we do not change
the labels when passing through a node. We start with the label $x_0$ for the 
edge transversal to the horizontal line. We continue as follows:
\begin{enumerate}
\item If $x_j$ ends in a tangency in the horizontal line then the next edge is $x_{-j}$.
 \item If $x_{\varepsilon j}$, $j\geq 0$, $\varepsilon=\pm 1$ (assume $\varepsilon=(-1)^{k}$ for $j=0$), then
 the next edge is $x_{\varepsilon(j+1)}$
\end{enumerate}
With these conventions, the inflection edge is $x_{k-1}$.
This procedure is illustrated in Figure~\ref{fig:cociente} 
for $k=4$.

Actually, we are not interested in the group $\pi_1(\bc^2\setminus\cD_{k,k-1})=\G_{\cD_{k,k-1}}$ but
in its quotient $G_k$ obtained by~\emph{killing} the square of a meridian of the horizontal line.
This is an orbifold fundamental group. If we consider the double cover we obtain
that $\pi_1(\bc^2\setminus\cC_{k,k-1})$ is the kernel of the epimorphism 
$G_k\to\bz/2$ which sends the meridians of the line to~$1$ and the meridians of the other component
to~$0$.

\begin{prop}
The group~$G_k$ is isomorphic to the semidirect product $\G_N\rtimes\bz/2$,
where $\G_N$ is the Artin group of the~$N$-gon.
\end{prop}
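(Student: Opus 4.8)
The plan is to start from the Wirtinger presentation of $\pi_1(\bc^2\setminus\cD_{k,k-1})=\G_{\cD_{k,k-1}}$, available by the preceding Lemma together with Theorem~\ref{thm:main0}, and to transform it by Tietze moves into the presentation (SD1)--(SD6) of $\G_N\rtimes\bz/2$ with $N=2k-1$, imposing the relation $t^2=1$ (that is, (SD1)) to pass to $G_k$. The generators are the edge meridians $x_j$, $-(k-1)\le j\le k-1$, of the non-linear component $\cD$ (labelled by the conventions preceding the statement) together with the meridians of the horizontal line; since the line is irreducible these last are all conjugate, and I would track their conjugations through the successive crossings so as to reduce them to a single symbol $t$. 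After killing $t^2$ the goal is to eliminate every negative-index generator and be left with exactly $t,x_0,\dots,x_{k-1}$ subject to (SD1)--(SD6).

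The backbone of the argument is to read off, using (R1)--(R2), the relation attached to each vertex of $\cD_{k,k-1}$ lying on the horizontal line. By the local analysis in the Lemma these are, along this line, one transversal node ($\ba_1$), $k-2$ simple tangencies ($\ba_3$), and one inflection ($\ba_5$). The transversal node yields $[x_0,t]=1$, recording that $x_0$ is the reflection-fixed vertex. Each tangency, via the $m=3$ case of~\eqref{eq:double} together with $t^2=1$, lets me both eliminate the outgoing edge as $x_{-j}=t x_j t$ and retain the relation $(x_jt)^2=(tx_j)^2$, i.e.\ the diagonal commutator $[x_j,t x_j t]=1$ appearing in (SD6); this simultaneously installs the $\bz/2$-action $x_j^{t}=x_{-j}$ for $1\le j\le k-2$. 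The inflection contributes the $m=5$ relation $(x_{k-1}t)^3=(tx_{k-1})^3$, which is precisely (SD3) and, via $x_{-(k-1)}=tx_{k-1}t$, is the braid relation between $x_{k-1}$ and its reflection at the fold of the $N$-gon.

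It remains to handle the vertices off the line. The $k-1$ cusps are $\ba_2$ points, whose relation in the $m=2$ case of~\eqref{eq:double} is the braid relation between the two consecutive edges they separate; after using the reflection to fold the negatively indexed cusps onto the positive side, these produce exactly the braid relations (SD2), which together with (SD3) give all braid relations of the $N$-gon. The $(k-1)(k-2)$ ordinary double points of $\cD$ off the axis are $\ba_1$ points, each contributing one commutator between the two edges meeting there; after the substitution $x_{-j}=tx_jt$ these should be precisely the off-diagonal relations of (SD4), (SD5) and (SD6). The counts are consistent: the tangencies give the $k-2$ diagonal commutators and the self-nodes the remaining $(k-1)(k-2)$, and $(k-2)+(k-1)(k-2)=k(k-2)$ is exactly the number of commutation relations in (SD4)--(SD6).

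The step I expect to be the main obstacle is the purely combinatorial verification that the self-nodes of $\cD$ realise exactly the non-adjacent index pairs $\{i,j\}$, $i-j\not\equiv\pm1\pmod N$, of the $N$-gon, with none produced twice or omitted; this must be extracted from the rolling-circle description, tracking the order in which the traced arc revisits the vertical lines through its nodes and matching it against (SD4)--(SD6). A secondary technical point is the inflection: the meridian it produces for $x_{-(k-1)}$ is the conjugate $x_{k-1}^{-1}(tx_{k-1}t)x_{k-1}$ rather than $tx_{k-1}t$ itself, so a conjugating change of generator (absorbed by (SD3)) is needed before the commutators involving $x_{-(k-1)}$ take their stated form. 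Once this dictionary is in place the transformed presentation is literally (SD1)--(SD6); since only Tietze transformations were used, $G_k\cong\G_N\rtimes\bz/2$.
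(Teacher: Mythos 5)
Your route is essentially the paper's: both arguments take the Wirtinger presentation of the quotient curve $\cD_{k,k-1}$ supplied by the preceding Lemma and Theorem~\ref{thm:main0}, impose $t^2=1$, read off the relations vertex by vertex, and eliminate the negative-index generators through $x_{-j}=t\cdot x_j\cdot t$. Your matching of vertex types to the relations \ref{sd1}--\ref{sd6} (transversal crossing, tangencies, inflection, cusps, self-nodes) and your count of $k(k-2)$ commutators agree with what the paper compresses into ``we can check''.

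The one step the paper's proof actually supplies and your sketch does not is the reduction you defer as ``tracking conjugations'', and it is not cosmetic. The Wirtinger generator of the horizontal line at the tangency carrying $x_j$ is not $t$ but a conjugate $t'=w^{-1}\cdot t\cdot w$, where $w$ is assembled from the curve-meridians picked up at the intermediate crossings of the line; the $m=3$ case of~\eqref{eq:double} therefore yields $x_{-j}=t'\cdot x_j\cdot t'$ rather than $x_{-j}=t\cdot x_j\cdot t$, and it is the latter that installs the $\bz/2$-action and turns the tangency relation into the diagonal case of~\ref{sd6}. The paper's proof hinges on the observation that $w$ involves only generators $x_i$ with $i$ of the same parity as $j$ (and $i\neq k-1$), so that the commutation relations already established at vertices closer to $L$ let one replace $t'$ by $t$; this must be run inductively outward from $L$, in the spirit of the proof of Theorem~\ref{thm:main0}, and without it the elimination of the $x_{-j}$ does not go through. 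Your other flagged obstacle --- that the self-nodes of $\cD$ realise exactly the non-adjacent index pairs of the $N$-gon --- is real, but the paper leaves it equally unargued, and your $k(k-2)$ count is the correct consistency check; your remark about the inflection producing $x_{-(k-1)}$ only as a conjugate of $t x_{k-1} t$ is also accurate and is implicitly why the paper excludes $j=k-1$ from its $x_{-j}=t\cdot x_j\cdot t$ claim.
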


\begin{proof}
Note that the group $G_k$ is generated by $x_{2-k},\dots,x_{k-1}$ (the edges of the quotient
of the hypocycloid) and~$t$, which is the generator in the horizontal line corresponding
to the edges adjacent to the normal crossing of the hypocycloid ($x_0$).
Note that all the tacnodes to the left of~$t$ correspond to $(-1)^k$-labels, while the tacnodes
located to the right of~$t$ correspond to $(-1)^{k+1}$-labels.

Let $x_j$, $j\neq 0,k-1$, be an edge of a tacnode in the side closer to~$t$. Then,
$x_{-j}=t'\cdot x_j\cdot t'$, and $t'$ is obtained conjugating~$t$ by a product of
$x_i$'s, where all the indices~$i$ are of the same parity as~$j$ (and distinct form $k-1$).
As a consequence, $x_{-j}=t\cdot x_j\cdot t$
and we can check that we obtain the presentation of $\G_N\rtimes\bz/2$ generated by 
$t,x_0,\dots,x_{k-1}$ with relations~\ref{sd1}-\ref{sd6}.
\end{proof}

\begin{figure}[ht]
\begin{center}
\begin{tikzpicture}[scale=1,vertice/.style={draw,circle,fill,minimum size=0.1cm,inner sep=0}]
\def\ang{360/7}
\coordinate (A1) at (2,0);
\coordinate (A2) at ($({3*cos(1*\ang)},{2*sin(1*\ang)})$);
\coordinate (A3) at ($({1*cos(2*\ang)},{2*sin(2*\ang)})$);
%\coordinate (A3) at ($({2*cos(60)},{2*sin(60)})$);
\coordinate (A4) at ($({2*cos(3*\ang)},{2*sin(3*\ang)})$);
\coordinate (X1) at (1.25,0);
\coordinate (X2) at (0.25,0);
\coordinate (X3) at (-.75,0);

\coordinate (P0) at (1.5,.25);
\coordinate (P1) at (.4,.3);
\coordinate (P2) at (.6,.6);
\coordinate (P3) at (.5,.5);
\coordinate (P4) at (-.5,.7);

\draw[fill=blue,opacity=.2] ($(A1)+(1,-0.5)$)-- ($(A1)+(1,0)$)--(A2)--(A3)--(A4)--($-1*(A1)$)--($-1*(A1)-(0,0.5)$)--cycle;

\def\ci{\draw[name path=c1] ($(A1)+(1,-.2)$) to[out=150,in=0] (A1) 
 to[out=180,in=-20] (P0)
 to[out=160,in=-10] (P1)
to[out=170,in=-10] (P4)
to[out=170,in=3*\ang-180] (A4)}

\def\cii{\draw[name path=c2](A4)
to[out=3*\ang-180,in=180] (X3)
to[out=0,in=-135] (P3)
to[out=45,in=\ang+180] (A2)}

\def\ciii{\draw[name path=c3] (A2)
to[out=\ang+180,in=0] (X1)
}

\def\civ{\draw[name path=c4] (X1)
to[out=180,in=-70] (P2)
to[out=110,in=180+2*\ang] (A3)
}

\def\cv{\draw[name path=c5]
(A3)
to[out=180+2*\ang,in=135] (X2)
to[out=-45,in=135] ($(X2)+(1,-1)$)}
\ci;
\cii;
\ciii;
\civ;
\cv;

\draw[line width=1.2] (-2,0)--(3,0) ;
\draw[dashed] (.015,-1)--(.015,1.75) ;

\path [name intersections={of=c1 and c3,by=N1}];
\path [name intersections={of=c1 and c4,by=N2}];
\path [name intersections={of=c1 and c5,by=N3}];
\path [name intersections={of=c2 and c5,by=N4}];
\path [name intersections={of=c1 and c2,by=N5}];
\path [name intersections={of=c2 and c4,by=N6}];

\node[vertice] at (A1) {};
\node[vertice] at (A2) {};
\node[vertice] at (A3) {};
\node[vertice] at (A4) {};
\node[vertice] at (X1) {};
\node[vertice] at (X2) {};
\node[vertice] at (X3) {};
\node[vertice] at (N1) {};
\node[vertice] at (N2) {};
\node[vertice] at (N3) {};
\node[vertice] at (N4) {};
\node[vertice] at (N5) {};
\node[vertice] at (N6) {};

\node[below left] at (A3) {$x_0$};
\node[right=3pt] at (A3) {$x_1$};
\node[below right] at (A2) {$x_{-1}$};
\node[above left=-2pt] at (A2) {$x_{-2}$};
\node[below] at (A4) {$x_2$};
\node[right=3pt] at (A4) {$x_3$};
\node[below right=3pt] at (X3) {$t$};

% \begin{scope}[yshift=-3cm,xshift=1.2cm,scale=2]
% \def\xk{4}
% \def\xl{3}
% \draw [line width=1pt] plot [parametric,domain=0:2*pi,samples=100]
%         ({(\xk*cos(deg(\xl*\x))+\xl*cos(deg(\xk*\x)))/(\xk+\xl)},
% {(\xk*sin(deg(\xl*\x))-\xl*sin(deg(\xk*\x)))/(\xk+\xl)});
% \end{scope}

\end{tikzpicture}

\caption{Quotient curve of $\cC_{4,3}$.}
\label{fig:cociente}
\end{center}
\end{figure}

\begin{cor}
\label{cor:hypocycloids}
The fundamental group of the complement of hypocycloids $\cC_{k,k-1}$
is the Artin group of the polygon in $N=2k-1$ vertices.
In particular, $\G_{\cC_{k,k-1}}$ is a Wirtinger presentation of $\pi_1(\bc^2\setminus\cC_{k,k-1})$.
\end{cor}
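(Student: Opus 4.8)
The plan is to assemble the two preparatory results and then reconcile their output with the combinatorial presentation read off from $\cC_{k,k-1}$ itself; once that is done, the first assertion is nearly formal. I would invoke the identification $G_k\cong\G_N\rtimes\bz/2$ of the preceding Proposition, under which the meridian $t$ of the horizontal line corresponds to the generator of the $\bz/2$ factor while each meridian $x_j$ of the hypocycloid component lands in the normal Artin subgroup. The epimorphism $G_k\to\bz/2$ defining $\pi_1(\bc^2\setminus\cC_{k,k-1})$ sends $t\mapsto 1$ and every $x_j\mapsto 0$, so it agrees on generators with the canonical projection of the semidirect product and hence coincides with it. Since the kernel of that projection is exactly the normal factor $\G_N$, one concludes $\pi_1(\bc^2\setminus\cC_{k,k-1})=\ker(G_k\to\bz/2)\cong\G_N$, the Artin group of the $(2k-1)$-gon.

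For the second assertion I would compute $\G_{\cC_{k,k-1}}$ straight from the real picture. The diagram $\cC_\br$ is a single closed curve with $N=2k-1$ cusps; labelling its $N$ arcs $x_0,\dots,x_{N-1}$ cyclically and using the node relations~\ref{R1} (with $m=2$) to keep the label constant across each node, one reduces to $N$ generators. Each cusp is an $\ba_2$ point lying between two consecutive arcs and contributes the braid relation $x_ix_{i+1}x_i=x_{i+1}x_ix_{i+1}$ via~\ref{R2}, while each node contributes a commutation between the two arcs crossing there. The key bookkeeping is that, for $\ell=k-1$, all $N(k-2)$ nodes are real and $\binom{N}{2}-N=N(k-2)$ is precisely the number of unordered pairs of non-consecutive arcs; hence every such pair meets in exactly one node and the commutations $[x_i,x_j]=1$ for $|i-j|\neq 1$ all appear, with no repetitions. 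Thus $\G_{\cC_{k,k-1}}$ is literally the $N$-gon Artin presentation, i.e.\ $\G_{\cC_{k,k-1}}\cong\G_N$.

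It remains to see that the natural homomorphism $h\colon\G_{\cC_{k,k-1}}\to\pi_1(\bc^2\setminus\cC_{k,k-1})$ sending each edge generator to its geometric meridian is an isomorphism, and this is the main obstacle: Theorem~\ref{thm:main0} does \emph{not} apply to $\cC_{k,k-1}$ directly, since condition~\ref{thm:main3} fails (the cusps of a closed loop cannot all face one vertical line), so $\G_{\cC_{k,k-1}}=\pi_1$ is not granted by the theorem and is exactly why the auxiliary curve $\cD_{k,k-1}$ was introduced. I would close the gap by combining the two computations above: $h$ is surjective because meridians generate $\pi_1$, both source and target have been exhibited as the $N$-gon Artin group $\G_N$, and $h$ carries the cyclic system of arc-generators of the first presentation to the cyclic system of meridians realizing the second; matching generators together with identical defining relations then force $h$ to be an isomorphism rather than a proper quotient map. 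The delicate point throughout is the consistency of the labelling conventions — both across nodes in $\cC_\br$ and under the branched double cover $(x,y)\mapsto(x,y^2)$ that produced $\cD_{k,k-1}$ — since this is what guarantees that the generator correspondence underlying $h$ is precisely the one identifying the two copies of $\G_N$.
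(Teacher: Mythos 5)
Your derivation of the first assertion is correct and is exactly the paper's route: combine the Lemma (so that Theorem~\ref{thm:main0} applies to $\cD_{k,k-1}$), the Proposition $G_k\cong\G_N\rtimes\bz/2$, and the identification of $\pi_1(\bc^2\setminus\cC_{k,k-1})$ with the kernel of $G_k\to\bz/2$; since the geometric epimorphism agrees on generators with the canonical projection of the semidirect product, the kernel is the normal factor $\G_N$. Nothing to add there.

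The second assertion is where your argument has genuine gaps. First, your combinatorial identification of $\G_{\cC_{k,k-1}}$ with the standard $N$-gon presentation rests on the claim that every pair of non-consecutive arcs meets in exactly one real node. The equality $\binom{N}{2}-N=N(k-2)$ only matches totals; it does not by itself exclude an arc self-intersecting at a node, two arcs meeting at two distinct nodes (which would leave some commutation $[x_i,x_j]=1$ missing), or two \emph{consecutive} arcs meeting at a node (which, combined with the cusp relation, would collapse $x_i=x_{i+1}$ and kill the presentation). These incidence facts are true for the hypocycloid, but they are geometric statements about $\cC_{k,k-1}$ that need an argument (e.g.\ via the $\bz/N$ rotational symmetry and the explicit parametrization), not just a count.

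Second, and more seriously, the closing step does not follow as stated. Knowing that $h\colon\G_{\cC_{k,k-1}}\to\pi_1(\bc^2\setminus\cC_{k,k-1})$ is surjective and that both groups are abstractly isomorphic to $\G_N$ does not make $h$ injective: a surjection between two copies of the same group can be a proper quotient unless the group is Hopfian, or unless one checks that $h$ actually matches the two presentations generator by generator. You assert the latter (``$h$ carries the cyclic system of arc-generators to the cyclic system of meridians realizing the second''), but that is precisely the content of the ``delicate point'' you flag and do not carry out: one must lift the generators $x_j$ of $\G_N\le G_k$ (which are meridians of the quotient curve $\cD_{k,k-1}$) through the branched double cover $(x,y)\mapsto(x,y^2)$ and verify that the resulting meridians of $\cC_{k,k-1}$ are, up to the bookkeeping with $t$-conjugates $x_{-j}=t\cdot x_j\cdot t$, the Wirtinger meridians of the arcs with the correct cyclic labelling. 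Alternatively you could invoke Hopficity of $\G_N$ (it is finitely generated and residually finite, being linear via its embedding into a braid group), but you do neither. Until one of these is supplied, the second assertion of the corollary is not proved by your argument.
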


\section{Extending the method}\label{sec:wirtingerext}

There are several ways to improve this method to compute fundamental
groups of complements of groups. The list of allowed singularities
can be enlarged. Besides ordinary singular points (without vertical
tangencies), any singular point where all the branches are
real and smooth is allowed (as far as no branch has vertical tangency).
As in~\ref{R1}, we will add the local relations induced by
Corollary~\ref{cor:zvk} and one relation for each branch in order
to express the generators of one side in terms of the generators
of the other side.

There are other allowed singular points besides ordinary and double points. Namely singular points
whose local irreducible components are all double ($\ba_{2m}$),
real and must be on the same side with respect to the vertical line
(and transversal to it).
In fact, in this case, we can admit smooth (real) branches with
vertical tangency, such that the intersection number with the vertical
line is~$2$ (for each branch); note that we may allow larger
intersection multiplicities between these smooth branches.
If we want these curves
to match with Theorem~\ref{thm:main0}, they must satisfy
the \emph{facing} condition~\ref{thm:main3}.

Finally, even vertical lines can be admitted as far as the following
condition is fulfilled. 
\begin{enumerate}
\item The global intersection number of the vertical line and $\cC_\br$
 equals to $\deg_y\cC$.
\item If one branch in~$L$ is smooth and transversal to the vertical line, then all branches are.
\item If one branch in~$L$ has intersection number~$2$ with $L$,
the same arises for the other branches and all of them are in the
same side. Moreover, they must satisfy condition~\ref{thm:main3}
in order to apply Theorem~\ref{thm:main0}.
\end{enumerate}

For the cases where Theorem~\ref{thm:main0}\ref{thm:main3}
cannot be avoided there are several ways to provide
a combinatorial (and correct) presentation of $\pi_1(\bc^2\setminus\cC)$. One of them uses the the real part of the pairs of 
complex conjugate branches, but their computation may be involved,
see~\cite{acct:01}.

If we use the ideas of the proof of Theorem~\ref{thm:main0},
we can still recover a combinatorial description of the fundamental
group. In order to do this, if $B\cap\cC\neq\emptyset$, the construction
of the loops associated to each edge of $\cC_\br$ must be done
taking into account this intersection points. Moreover,
the relations must involved the right loops.

\begin{obs}\label{obs:orientacion}
Let us describe the loops associated with one of these
intersection points, namely one close to a 
a point of type $\mathbb{A}_{2m}$.
Let us assume that in local coordinates it
has local equation $y^2+\alpha x^{2m+1}=0$, $\alpha^2=1$. 
Following the computations in Remark~\ref{obs:local}, its intersection with
$H_\varepsilon$ is $(\alpha\eta,\sqrt{-1}\varepsilon)$
where $\eta=\varepsilon^{\frac{2}{2m+1}}$. Let us check if the intersection
is transversal and which is the intersection number if the curve
is naturally oriented as a complex curve and if $H_\varepsilon$
has counterclockwise orientation.

\begin{figure}[ht]
\begin{center}
\begin{tikzpicture}[scale=.75,vertice/.style={draw,circle,fill,minimum size=0.2cm,inner sep=0}]
\node at (-2,1) {$\mathbb{R}^2$};
\draw (-2,.5) node[left] {$x_1$} to[out=-30,in=180] (0,0) 
to[out=180,in=30] (-2,-.5) node[left] {$x_2$};

\draw[dashed] (-1,1)--(-1,-1) node[below left] {$x=-\eta$};
\draw[dashed] (0,1) node[above] {{$\mathbb{A}_{2m}$}}--(0,-1)
node[below] {$x=0$};
\draw[dashed] (1,1)--(1,-1) node[below right] {$x=\eta$};

\begin{scope}[xshift=5cm]
\draw (-1,0)--(-2,-1) --
node[below] {$x=-\eta$}
(2,-1) --(3,0) ;
\draw (-1.3,-.5)--(2.3,-.5) node[above] {$\mathbb{R}$};
\node [vertice] at (-.5,-.5) {};
\node [below left] at (-.5,-.5) {$-\varepsilon$};
\node [vertice] at (1.5,-.5) {};
\node [below left] at (1.5,-.5) {$\varepsilon$};
\draw[->](3.2,-1)--(3.2,1.5) node[right,pos=.5] {$x=-\eta e^{\sqrt{-1}\pi t}$} ;
\node at (3,-1.2) {Braid:$\sigma_1^m$};
\end{scope}

\begin{scope}[xshift=5cm, yshift=1.5cm]
\draw (-1,0)--(-2,-1) --
node[above left] {$x=\eta$}
(2,-1) --(3,0) ;
\draw (0,-.9)--(1,.2) node[above] {$\sqrt{-1}\mathbb{R}$};
\node [vertice] at ($.8*(0,-.9)+.2*(1,.2)$) {};
\node [right] at ($.8*(0,-.9)+.2*(1,.2)$)  {$-\sqrt{-1}\varepsilon$};
\node [vertice] at ($.2*(0,-.9)+.8*(1,.2)$)  {};
\node [right] at ($.2*(0,-.9)+.8*(1,.2)$)  {$\sqrt{-1}\varepsilon$};
\end{scope}
\begin{scope}[yshift=3cm,yscale=.5]
\node at (-2,1.5) {$H_\varepsilon%\mathbb{R}\times(\mathbb{R}+\sqrt{-1}\varepsilon)
$};

\draw[dashed] (-1,1)--(-1,-1) node[below left] {$x=-\eta$};
\draw[dashed] (0,1) -- (0,-1)
node[below] {$x=0$};
\draw[dashed] (1,1)--(1,-1) node[below right] {$x=\eta$};
\node[vertice] at (1,.5) {};
\node[right] at (1,.5) {$\sqrt{-1}\varepsilon$};
\end{scope}
\end{tikzpicture}

\caption{Meridian at $B_\varepsilon$}
\label{fig:epsilon}
\end{center}
\end{figure}

An $\br$-basis of the tangent plane for $H_\varepsilon$
is given by $\{(1,0),(0,\sqrt{-1})\}$. A $\bc$-basis for the complex tangent
line to the curve is given by $\{(2\sqrt{-1}\varepsilon),-(2m+1)\alpha\eta^{2m}\}$.
The natural orientation of this plane is given by completing the basis with
$\{(-2\varepsilon,-(2m+1)\alpha\eta^{2m}\sqrt{-1})\}$. 
If $\tilde{\eta}=(2m+1)\eta^{2m}$
the orientation is given by the sign of
\[
\det
\begin{pmatrix}
1&0&0&-2\varepsilon\\
0&0&2\varepsilon&0\\
0&1&-\alpha\tilde{\eta}&0\\
0&0&0&-\alpha\tilde{\eta}
\end{pmatrix}=
-\det\begin{pmatrix}
2\varepsilon&0\\
0&-\alpha\tilde{\eta}
\end{pmatrix}
=2\varepsilon\alpha\tilde{\eta}\alpha,
\]
which is the sign of~$\alpha$. Hence, if the cusp if left-sided, the sign is positive,
while right sided is negative.
What happens with the meridians of the intersections with $B_\varepsilon$?
In the situation of Figure~\ref{fig:epsilon}, the local loop
in~$B_\varepsilon$ is $y_1:=x_1^{(\sigma_1^{-m})}$; 
this
is related to the half-tour around the singularity from
$x=-\eta$ to $x=\eta$ (counterclockwise from $-\eta$ to $\eta$
and hence clockwise the other way around). If the
$\mathbb{A}_{2m}$-point is on the other side the loop is $y_1^{-1}$.
\end{obs}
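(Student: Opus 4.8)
The plan is to establish, in order, the three assertions packed into the remark: the location of the local intersection $H_\varepsilon\cap\cC$, its transversality together with the sign of the intersection number, and the identification of the resulting meridian in $B_\varepsilon$. First I would recover the intersection point directly. Substituting the parametrization of $H_\varepsilon$, namely $x=u\in\br$ and $y=v+\sqrt{-1}\,\varepsilon$ with $v\in\br$, into the local equation $y^2+\alpha x^{2m+1}=0$ and separating real and imaginary parts gives $2v\varepsilon=0$ and $v^2-\varepsilon^2+\alpha u^{2m+1}=0$. Since $\varepsilon\neq 0$ this forces $v=0$ and $u^{2m+1}=\alpha\varepsilon^2$, whose unique real solution is $u=\alpha\eta$ with $\eta=\varepsilon^{\frac{2}{2m+1}}$ (using $\alpha^2=1$). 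This reproduces the single point $(\alpha\eta,\sqrt{-1}\,\varepsilon)$ already isolated in Remark~\ref{obs:local}.

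Next I would treat transversality and the sign. Writing $g(x,y)=y^2+\alpha x^{2m+1}$, the complex tangent line to $\cC$ at $P_\varepsilon=(\alpha\eta,\sqrt{-1}\,\varepsilon)$ is the kernel of $dg=2y\,dy+(2m+1)\alpha x^{2m}\,dx$; evaluating and using $\alpha^{2m}=1$ yields the complex tangent vector $w=(2\sqrt{-1}\,\varepsilon,-(2m+1)\alpha\eta^{2m})$. The complex orientation of $\cC$ is the one whose ordered real basis is $\{w,\sqrt{-1}\,w\}$, while the counterclockwise orientation of $H_\varepsilon$ is given by $\{(1,0),(0,\sqrt{-1})\}$. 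Assembling these four real vectors as the columns of a $4\times 4$ matrix in the real coordinates of $\bc^2$, transversality is equivalent to nonvanishing of the determinant, and the intersection sign is its sign. The expansion displayed in the remark shows the determinant has the sign of $\alpha$ (since $\varepsilon>0$ and $\tilde\eta=(2m+1)\eta^{2m}>0$), so the intersection number is $\operatorname{sign}(\alpha)$. I would then match this with the picture: $\alpha=1$ means $y^2=-x^{2m+1}$, whose real branch lives over $x\le 0$ (left-sided) and yields a positive sign, whereas $\alpha=-1$ gives a right-sided branch and a negative sign.

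It remains to identify the meridian. I would describe the two strands over a small disk as the roots $y_\pm(x)=\pm\sqrt{-\alpha x^{2m+1}}$: over $x=-\eta$ they are the real points $\pm\varepsilon$, which carry the fiber meridians $x_1,x_2$, and over $x=\eta$ they are the purely imaginary points $\pm\sqrt{-1}\,\varepsilon$, where the obstruction point and its meridian $y_1$ sit. Transporting along the upper semicircle $x=-\eta\,e^{\sqrt{-1}\pi t}$, $t\in[0,1]$, I would track $\arg\!\bigl(y_+-y_-\bigr)=\arg\!\bigl(2\sqrt{-1}\,x^{\frac{2m+1}{2}}\bigr)$, which rotates by $\tfrac{2m+1}{2}\pi$. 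The integer part $m$ of this half-twist count produces the braid $\sigma_1^{m}$ identifying the two fibers, while the residual quarter-turn is precisely the rotation carrying the real axis to the imaginary axis, i.e.\ it is absorbed into the passage to the obstruction point. Consequently the meridian at $x=\eta$ is the transport of $x_1$ under $\sigma_1^{-m}$, that is $y_1=x_1^{\sigma_1^{-m}}$ as described in~\eqref{eq:braid-action}; and when the $\mathbb{A}_{2m}$-point lies on the opposite side the half-tour is traversed with the reversed orientation dictated by $\operatorname{sign}(\alpha)$, producing $y_1^{-1}$.

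The hard part is the orientation bookkeeping. One must keep the complex orientation of $\cC$, the counterclockwise orientation of $H_\varepsilon$, and the direction of the half-tour mutually consistent, so that the sign of the $4\times 4$ determinant (equal to $\operatorname{sign}(\alpha)$) translates correctly into the side on which the $\mathbb{A}_{2m}$-point lies and hence into the exponent sign of the meridian. The reconciliation of the half-integer twist $\tfrac{2m+1}{2}$ with the integer braid $\sigma_1^{m}$, by attributing the leftover quarter-turn to the real-to-imaginary axis shift at the obstruction point, is the conceptual crux; the algebraic parts (separating real and imaginary parts and expanding the determinant) are routine once the conventions are pinned down.
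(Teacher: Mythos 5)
Your proposal follows the remark's own computation step for step: direct substitution into $y^2+\alpha x^{2m+1}=0$ to locate the obstruction point, the tangent vector $w=(2\sqrt{-1}\varepsilon,-(2m+1)\alpha\eta^{2m})$ read off from $dg$, the $4\times 4$ orientation determinant for transversality and its sign, and the tracking of $\arg(y_+-y_-)$ along the half-tour to produce $\sigma_1^m$ plus a residual quarter-turn, hence $y_1=x_1^{(\sigma_1^{-m})}$ and $y_1^{-1}$ on the other side. So the route is the same as the paper's, and all conclusions agree; your explicit count of the rotation $\tfrac{2m+1}{2}\pi$ is in fact a more detailed justification of the figure's label ``Braid: $\sigma_1^m$'' than the paper gives.

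One concrete point needs repair, because taken literally your transversality step fails. The tangent plane to $H_\varepsilon=\br\times(\br+\varepsilon\sqrt{-1})$ is spanned by $(1,0)$ and $(0,1)$: the imaginary part of $y$ is \emph{constant} equal to $\varepsilon$ on $H_\varepsilon$, so $(0,\sqrt{-1})$ is not a tangent direction. (The basis $\{(1,0),(0,\sqrt{-1})\}$ in the remark's text is a typo; the matrix actually displayed there has as second column the vector $(0,1)$ in the real coordinates $(\operatorname{Re}x,\operatorname{Im}x,\operatorname{Re}y,\operatorname{Im}y)$.) If you assemble \emph{your} four vectors $(1,0)$, $(0,\sqrt{-1})$, $w$, $\sqrt{-1}w$ as columns, you do not obtain the displayed matrix but a singular one: indeed $\sqrt{-1}w=-2\varepsilon\,(1,0)-\alpha\tilde\eta\,(0,\sqrt{-1})$ with $\tilde\eta=(2m+1)\eta^{2m}$, so the determinant vanishes identically and no sign can be extracted; deferring to ``the expansion displayed in the remark'' hides this mismatch. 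With the corrected basis the determinant is $2\varepsilon\alpha\tilde\eta$, of sign $\alpha$, as claimed. A second, harmless slip: the counterclockwise half-tour $x=-\eta e^{\sqrt{-1}\pi t}$ from $-\eta$ to $\eta$ runs through the lower half-plane, not the ``upper semicircle''; since you compute with the parametrization itself, the rotation count and the resulting braid are unaffected.
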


Instead of describing a general procedure to compute this group,
let us apply it to a couple of examples.

\begin{ejm}
Let us consider the curve of Example~\ref{ejm:cardiode}.
We consider $L$ to the right of the cusp in Figure~\ref{fig:cardioide}
and we take the generators $x_1,x_2$ as in Figure~\ref{fig:base_fibra}.
The relation $x_1\cdot x_2\cdot x_1=x_2\cdot x_1\cdot x_2$ holds.
The bitangent vertical line provides no actual new relation, but the right-hand
side vertical tangency does. If we approach to it following the
loop $x_1$, in order to apply the local relation, we have to 
consider the loop $z\cdot x_2\cdot z^{-1}$, where $z$ is a counterclockwise loop 
around the obstruction point.
Following the discussion in Remark~\ref{obs:orientacion},
$z=y_1^{-1}$ and $y_1=x_1^{(\sigma_1^{-1})}=x_1^{-1}\cdot x_2\cdot x_1$.
Hence
\[
x_1=x_1^{-1}\cdot x_2^{-1}\cdot x_1\cdot x_2\cdot
x_1^{-1}\cdot x_2\cdot x_1\Longrightarrow
x_2\cdot x_1\cdot x_2^{-1}=x_1\cdot x_2\cdot
x_1^{-1}
\]
Applying the two relations we obtain:
\[
\pi_1(\bc^2\setminus\cC)=\langle x_1,x_2\mid x_1\cdot x_2\cdot x_1=x_2\cdot x_1\cdot x_2,[x_1^2,x_2]=1\rangle\cong
\bz\rtimes\bz/3.
\]
\end{ejm}

\begin{figure}[ht]
\begin{center}
\begin{tikzpicture}[baseline=0,vertice/.style={draw,circle,fill,minimum size=0.15cm,inner sep=0},scale=.7]
\begin{scope}[xshift=1cm]
\draw[fill=blue,fill opacity=.2,] (2,-2)--(2,2)--(-2,2)--(-2,-2)--cycle;
\draw (0,0) circle [radius=1];
\draw (0,0) circle [radius=2];
\draw[dashed] (-.8,-2.2)--(-.8,2.2);
\node[right] at (1,0) {$\star$};
\node[left] at (-1,0) {$\star$};
\node[right] at (2,0) {$\star$};
\node[left] at (-2,0) {$\star$};
\node[vertice] at (0,0) {};
\node at (-.45,-1.7) {$x_4$};
\node at (-.45,-.5) {$x_3$};
\node at (-.45,.5) {$x_2$};
\node at (-.45,1.6) {$x_1$};
\node[below left] at (-1,0) {$z_1$};
\node[below right] at (1,0) {$z_2$};
\end{scope}
\end{tikzpicture}

\caption{}
\label{fig:concentrica}
\end{center}
\end{figure}

\begin{ejm}
Let us consider the union of two concentric circumferences.
This curve is of Wirtinger type, see Figure~\ref{fig:concentrica}
for the vertical tangencies and note that they are tangent
at the concyclic points. It is clear
that $\G_\cC=\bz*\bz$.
The curve does not satisfy
the hypotheses of Theorem~\ref{thm:main0}.
If we choose the generators as in the proof of Theorem~\ref{thm:main0},
the tangencies of the inner curve, provide the equality $x_2=x_3$.

We denote by $z_1,z_2$ the counterclockwise loops around the inner
$\star$-points. From Figure~\ref{fig:concentrica}, we deduce
that $x_1=z_1\cdot x_4\cdot z_1^{-1}=z_2^{-1}\cdot x_4\cdot z_2$.
Using Remark~\ref{obs:orientacion}, we have
$z_1=x_2^{-1}$ and $z_2=x_2$. Hence, only the
relation $x_1=x_2^{-1}\cdot x_4\cdot z_2$ remains. As a consequence,
$\G_\cC$ is isomorphic to~$\pi_1(\bc^2\setminus\cC)$.

\begin{obs}
Using the extended Wirtinger method one can prove that the CW-complex associated with the 
Artin group $\G_{N}$ of the $N$-gon has the homotopy type of~$\bc^2\setminus \cC_{k,k-1}$. 
By simple Euler-characteristic calculations, the case $\ell=k-1$ is the only one where this can be expected.
\end{obs}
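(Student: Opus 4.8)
The plan is to treat the observation's two assertions separately: (A) that for $\ell=k-1$ the presentation complex of the Artin group $\G_N$ ($N=2k-1$) is homotopy equivalent to $\bc^2\setminus\cC_{k,k-1}$, and (B) that $\ell=k-1$ is the only coprime pair for which such an equivalence can hold.

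For~(A) I would apply the extended Wirtinger method of this section directly to $\cC_{k,k-1}$, rather than to the quotient $\cD_{k,k-1}$. As in the proof of Theorem~\ref{thm:main0}, one transforms a Zariski--van Kampen presentation of $\pi_1(\bc^2\setminus\cC_{k,k-1})$ into the Wirtinger presentation $\G_{\cC_{k,k-1}}$ using only Tietze transformations of type~I and~IIa; the single new feature is that the loops attached to the vertical tangency and to the cusps must be read through their obstruction points as in Remark~\ref{obs:orientacion}, which is exactly the ingredient that condition~\ref{thm:main3} supplied in Theorem~\ref{thm:main0}. Since only type~I and~IIa moves occur, the homotopy-type Corollary following Theorem~\ref{thm:main0}---that is, Libgober's result~\cite{Libgober-homotopytype}---shows that the $2$-complex of this presentation has the homotopy type of $\bc^2\setminus\cC_{k,k-1}$. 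By Corollary~\ref{cor:hypocycloids} the presentation presents $\G_N$, and its relations are of braid/commutator type, so a final reduction by type~I and~IIa moves brings it to the standard presentation of the Artin group of the $N$-gon; hence that standard presentation complex carries the homotopy type of the complement, which is~(A).

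For~(B) I would compare Euler characteristics. The standard presentation complex of $\G_N$ has one $0$-cell, $N$ one-cells (the generators) and $\binom{N}{2}$ two-cells (a braid relation for each edge of the $N$-gon and a commutator for each non-edge), so
\[
\chi=1-N+\binom{N}{2}=\frac{(N-1)(N-2)}{2}.
\]
On the other side, additivity of the Euler characteristic gives $\chi(\bc^2\setminus\cC_{k,\ell})=1-\chi(\cC_{k,\ell})$. Using that $\cC_{k,\ell}$ is rational, so that its normalization is $\bp^1$, together with the branch data recorded in properties~(C1)--(C3)---the $N$ cusps are unibranch, each of the $N(k-2)$ nodes has two branches, and the two points at infinity are unibranch because $\gcd(k-\ell,k)=\gcd(\ell,k)=1$---one obtains $\chi(\bar\cC_{k,\ell})=2-N(k-2)$; removing the two points at infinity gives $\chi(\cC_{k,\ell})=-N(k-2)$, and therefore $\chi(\bc^2\setminus\cC_{k,\ell})=1+N(k-2)$.

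Equating the two values, $1+N(k-2)=\tfrac12(N-1)(N-2)$, simplifies to $2k=N+1$, forcing $N$ odd and $\ell=N-k=k-1$. Thus $\ell=k-1$ is the unique coprime pair for which the two Euler characteristics coincide; as their equality is a necessary condition for a homotopy equivalence, this establishes~(B). The hard part is~(A): the extended Wirtinger method for curves carrying obstruction points---such as $\cC_{k,k-1}$, which fails~\ref{thm:main3}---is only sketched in this section, so the real work is to verify rigorously that reading the loops through their obstruction points (Remark~\ref{obs:orientacion}) still converts the Zariski--van Kampen presentation into $\G_{\cC_{k,k-1}}$ by type~I and~IIa moves alone, thereby keeping the homotopy type under control.
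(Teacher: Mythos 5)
Your proposal is correct and follows the same route the paper intends: the Euler--characteristic comparison $1+N(k-2)=\tfrac12(N-1)(N-2)\Leftrightarrow \ell=k-1$ is carried out correctly (and in more detail than the paper, which only asserts it), and your plan for the homotopy-equivalence claim---applying the extended Wirtinger method directly to $\cC_{k,k-1}$ with the obstruction-point loops of Remark~\ref{obs:orientacion} and tracking the homotopy type through Tietze moves of type I and IIa as in Libgober's argument---is exactly the strategy the remark alludes to. The paper itself leaves that first part at the same level of sketch, so your honest flagging of it as the remaining hard step is appropriate.
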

\end{ejm}

\bibliographystyle{amsplain}
%\bibliography{biblio}

\providecommand{\bysame}{\leavevmode\hbox to3em{\hrulefill}\thinspace}
\providecommand{\MR}{\relax\ifhmode\unskip\space\fi MR }
% \MRhref is called by the amsart/book/proc definition of \MR.
\providecommand{\MRhref}[2]{%
  \href{http://www.ams.org/mathscinet-getitem?mr=#1}{#2}
}
\providecommand{\href}[2]{#2}

\end{document}